%
%
%
\documentclass{amsproc}

\usepackage[colorlinks = true, linkcolor = black, anchorcolor = ., citecolor = black,filecolor=black,menucolor=black,runcolor=black,urlcolor=black]{hyperref}

\newtheorem{theorem}{Theorem}[section]
\newtheorem{lemma}[theorem]{Lemma}
\newtheorem{proposition}[theorem]{Proposition}

\theoremstyle{definition}

\theoremstyle{remark}
\newtheorem{remark}[theorem]{Remark}

\numberwithin{equation}{section}


\newcommand{\R}{\mathbb{R}}
\newcommand{\eps}{\varepsilon}
\newcommand{\epsi}{\varepsilon^{-1}}

\newcommand{\F}{\mathcal{F}}
\newcommand{\e}{\normalfont{\text{e}}}


\usepackage[maxbibnames=99,backend=bibtex]{biblatex}

\addbibresource{references.bib}

\begin{document}

\title{Infinite dimensional Slow Manifolds for a Linear Fast-Reaction System}

\author{Christian Kuehn}
\address{Department of Mathematics, Technical University of Munich}
\email{ckuehn@ma.tum.de}
\thanks{The first author was supported in part by DFG grant 456754695 and a Lichtenberg professorship via the  VolkswagenStiftung. The third author was supported by DFG grant 456754695.}

\author{Pascal Lehner}
\address{Department of Mathematics, University of Klagenfurt}
\email{pascal.lehner@aau.at}
\author{Jan-Eric Sulzbach}
\address{Department of Mathematics, Technical University of Munich}
\email{janeric.sulzbach@ma.tum.de}

\subjclass[2020]{Primary 35B25, 37D10; Secondary 37L25, 35A24, 35K15}


\keywords{Geometric singular perturbation theory, slow manifolds, infinite dimensions, fast-reaction systems.}

\begin{abstract}
The aim of this expository paper is twofold. We start with a concise overview of the theory of invariant slow manifolds for fast-slow dynamical systems starting with the work by Tikhonov and Fenichel to the most recent works on infinite-dimensional fast-slow systems. The main part focuses on a class of linear fast-reaction PDE, which are particular forms of fast-reaction systems. The first result shows the convergence of solutions of the linear system to the limit system as the time-scale parameter $\eps$ goes to zero. Moreover, from the explicit solutions the slow manifold is constructed and the convergence to the critical manifold is proven. The subsequent result, then, states a generalized version of the Fenichel-Tikhonov theorem for linear fast-reaction systems.
\end{abstract}

\maketitle



\section{Introduction}

In all natural sciences, one frequently observes dynamical systems that exhibit a multiple timescale behaviour. The fundamental reason for this is that natural systems tend to be influenced by many different processes, and the more processes are taken into account, the more likely it is to obtain systems that evolve on different timescales. The involved timescales can range, depending on the application, from nanoseconds, for example, due to fluctuations on atomic levels to years due to, for example, seasonal variability of the earth. The modeling of such systems often results in a coupled system of differential equations with many variables. These system can be computationally challenging, especially if the system is resolved on the fastest timescale. \\

One building block to model these multi-scale systems is to consider the case two timescales, so-called fast-slow systems. These differential equations can be written in their standard form abstractly as 
\begin{align}\label{general eq eps}
    \begin{split}
        \eps \partial_t u^{\eps} &= F(u^\eps, v^\eps, \eps), \\
             \partial_t v^{\eps} &= G(u^\eps, v^\eps, \eps),
    \end{split}
\end{align}
where $u^\eps=u^\eps(t) \in X $ is the fast variable and $v^\eps=v^\eps(t)\in Y$ the slow variable. The variables depend on time $t$ belonging to some interval $\mathcal{I}$ with $0 \in \mathcal{I}$. The state spaces $X$ and $Y$ are often assumed to be Banach spaces, while the system \eqref{general eq eps} is an ODE if state spaces are finite dimensional, while PDEs are the most common examples if $X$ and/or $Y$ are infinite dimensional. With $F,G$ we denote general maps, that may contain (differential) operators, which determine the evolution of the variables. The time scale parameter $\eps>0$ formally determines the quantitative amount of time scale separation between the two variables. Furthermore, suitable initial conditions can be added to the fast-slow system. We write the variables $u^\eps,v^\eps$ with a superscript $\eps$ to indicate the dependence of solutions on $\eps$.

Since in many applications $\eps$ is assumed to be very small, it is natural to consider the limit $\eps \to 0$ of system \eqref{general eq eps} and try to reduce the system at hand. We obtain in this singular limit the system
\begin{align}\label{general eq limit}
    \begin{split}
        0 &= F(u^0, v^0, 0), \\
             \partial_t v^{0} &= G(u^0, v^0, 0).
    \end{split}
\end{align}
This equation is a differential-algebraic differential equation, which is defined on the critical set
\begin{align*}
    C_0:= \{ (u^0,v^0)\in X\times Y: F(u^0,v^0,0)=0 \}.
\end{align*}
This limit system ignores the fast parts of the original system and only focuses on the dynamics of the slow variable and via this complexity reduction is therefore in general easier to study. For ODEs, the theory is quite well-understood and we briefly review it below. Yet, one major problem with this procedure for PDEs is that most differential operators that are defined on infinite dimensional spaces are not bounded, so taking the limit $\eps \to 0$ on the right-hand side of \eqref{general eq eps} may simply not be justified. The simplest example is the Laplace operator that is often used to model systems involving diffusion processes. Therefore, in the case of PDEs one has to be careful with obtaining the limit system, since taking the limit can potentially be much more singular than the ODE analog. \\

Next, we present a brief overview of the literature on fast-slow dynamical systems with a focus on the results in infinite dimensions. The theory for bounded operators, which leads to fast-slow ODEs dates back to the work by Tikhonov \cite{tikhonov1952systems} and Fenichel~\cite{fenichel1971persistence,fenichel1979geometric} and is known in the literature as geometric singular perturbation theory (GSPT). Suppose $S_0\subseteq C_0$ is a compact and normally hyperbolic submanifold, i.e., it is a sufficiently smooth manifold and the eigenvalues of the operator $A$ in the fast direction have nonzero real values for all $p\in S_0$.  Then the so called Fenichel-Tikhonov theory guarantees the existence of locally invariant slow manifolds $S^\eps$. For more details and applications of this theory we refer to the books by Jones \cite{jones1995geometric} and Kuehn \cite{kuehn2015multiple}. Yet, for infinite-dimensional systems new methods are required since applying the classical theory to PDEs does not hold and leads to problems as shown in \cite{hummel2022slow}.

One idea to overcome these problems is to adapt the theory of inertial manifolds \cite{chow1988invariant,FOIAS1988309}. The main goal of this theory is to construct a lower/finite-dimensional attracting invariant manifold. However, this approach requires a global dissipative structure of the PDE and compact embeddings to construct these lower-dimensional invariant manifolds. In addition, the inertial manifold theory relies on a spectral-gap type condition that allows the splitting of the underlying space into a finite-dimensional subspace and its complement.  

The next approach which is more along the lines of the classical fast-slow theory for ODEs is due to Bates et al. \cite{bates1998existence,bates2000invariant,bates2008approximately}. Here, for the case of semiflows the authors prove a local existence and persistence result of the so called slow manifold. In particular, their theory includes the case of partially dissipative systems, where the dynamics for the fast variable $u$ can be a PDE while the slow variable dynamics is an ODE. However, for fast-slow systems where both components are PDEs, this idea cannot be applied as was shown in \cite{hummel2022slow}. A different approach focusing on the construction of a slow manifold for differential inclusions can be found in \cite{gudovich2001tikhonov,Andreini2000result}.

In recent years progress was made for several classes of fast-slow PDEs. In \cite{hummel2022slow} a general theory is developed for the case of $F(u,v,\eps) = A u + f(u,v)$ and $G(u,v,\eps)= B u + g(u,v)$ with $A, B$ being closed linear (differential) operators and $f,g$ nonlinear functions. By introducing a modified notion of normal hyperbolicity the authors can prove, with few assumptions, the convergence of solutions of the system \eqref{general eq eps} to solutions of \eqref{general eq limit} in a suitable Banach space. Under slightly more stringent assumptions, one of which is a spectral-gap type condition for the operator $B$, the authors can also show the existence and persistence of slow manifolds. The assumption of normal hyperbolicity is key as was shown in \cite{engel2020blow} and some applications of the general theory can be found in \cite{engel2021connecting,hummel2022slow,Engeletal1}. For the class of fast-slow PDEs of the form $F(u,v,\eps) = \eps A u + f(u,v)$ and $G(u,v,\eps)= B u + g(u,v)$ the generalization of the classical results by Tikhonov and Fenichel was proven in \cite{kuehn2023fast}. In this work notion of normal hyperbolicity is adapted to account for the fact that the fast dynamics in this case are driven by the nonlinearity $f$. But again, a spectral-gap type condition is required to prove the existence and persistence of a slow invariant manifold. \\
%
In dynamical systems theory, understanding nonlinear systems is generally hard. However, the linear case is often easier to understand and it is usually a useful first step to gain deeper insights into the behavior of a system. In this work we study the linear case of the fast-slow systems analyzed in \cite{kuehn2023fast}, so called fast-reaction systems. Here we want to provide an explicit example, where all objects can be identified via formulas in contrast to the far more abstract setting in~\cite{kuehn2023fast}, which turns out to be quite instructive to learn about the problems arising in the geometric analysis of multiscale fast-reaction systems. These fast-reaction equations are an important subclass of fast-slow systems and occur in combination with diffusion operators for example in biology, chemistry or population dynamics. The mathematical study of fast-reaction systems dates back at least to the work by Hilhorst et al. \cite{hilhorst1996fast}. Over the last years there have been many results concerning the existence and uniqueness of solutions and the convergence to a limit system from a global functional-analytic perspective. A review on this topic can be found in \cite{iida2018review}. \\

Now, for the linear case of an infinite dimensional fast-reaction system we assume $f,g$ to be linear functions. Then, the aim of this work is to construct explicit solutions via Fourier methods and to show the convergence of solutions towards solutions of the limit system as the time scale parameter $\eps$ goes to zero. To do this we use typical Gronwall inequality estimates. Moreover, the slow manifold is \emph{explicitly} constructed, without requiring a spectral gap condition on the operator, and it converges to the critical manifold on the space of Fourier coefficients. This shows clearly that the nonlinear terms generate the difficulty regarding the mixing of different scales in the problem, which one would also conjecture intuitively from a physical perspective.

\section{Analysis of a linear fast-reaction system}

In the following, we present in detail the functional setting of the linear fast-reaction system at hand. 
As state spaces we choose $X:= Y: = L^2(\R^n)$ and for the linear operators we set $A:= \Delta -\mu I: H^2(\R^n)\to L^2(\R^n)$ and $B:= \Delta -\nu I: H^2(\R^n)\to L^2(\R^n)$, where $I$ denotes the identity on the Sobolev space $H^2(\R^n)$ and $\mu, \nu \in \R$ are additional parameters. 
As initial conditions we take $u_0$ and $v_0$, which are assumed to be in $H^2(\R^n)$.
Hence, we obtain a fully linear fast-reaction system the form
\begin{align} \label{linear system}
    \begin{split}
        \eps \partial_t u^{\eps}(t) &= \eps ( \Delta - \mu I) u^{\eps}(t) + \alpha u^{\eps}(t)+ \beta v^{\eps}(t), \\
             \partial_t v^{\eps}(t) &= ( \Delta - \nu I) v^{\eps}(t) + \gamma u^{\eps}(t) + \delta v^{\eps}(t),\\
                   u^\eps(0) &= u_0 \in H^2(\R^n), \quad v^\eps(0) = v_0 \in H^2(\R^n),
    \end{split}
\end{align}
for $t\geq 0$, where the non zero parameters $\beta,\gamma,\delta \in \R \backslash \{ 0 \}$ and $\alpha<0$ characterise the in this case linear functions $f$ and $g$. 
The associated limit system for $\eps\to 0$ is a differential-algebraic equation of the form
\begin{align} \label{linear limit system}
    \begin{split}
        0 &= \alpha u^0(t)+ \beta v^0(t), \\
        \partial_t v^0(t) &= (\Delta - \nu I) v^{0}(t) + \gamma u^{0}(t) + \delta v^{0}(t),\\
        u^0(0) &= - \alpha^{-1} \beta v^0(0) \in H^2(\R^n), \quad v^0(0)=v_0 \in H^2(\R^n)
    \end{split}
\end{align}
defined on a linear critical set 
\begin{align*}
    S_0 := \{ (u,v) \in H^2(\R^n) \times H^2(\R^n) : \alpha u + \beta v = 0\} \subset L^2(\R^n) \times L^2(\R^n).
\end{align*}

\subsection{Explicit solutions of the linear system}
Since both systems \eqref{linear system} and \eqref{linear limit system} are linear in $u^\eps$ and $v^\eps$, the existence and uniqueness of solutions follows in a straightforward way and we refer to \cite{evansPDE} for more details on the existence theory for linear PDEs. 
However, using Fourier analysis we can easily write down the explicit solutions of the systems. We start with the simpler system, the limit system.

\begin{lemma}
    The explicit solution to the limit system \eqref{linear limit system} has the form
    \begin{align*}
       \begin{pmatrix} 
       u^0(t)  \\ v^0(t) \end{pmatrix} = \begin{pmatrix}
         -\alpha^{-1}\beta v^0(t) \\ \e^{\kappa t} K_n(t) * v_0 \end{pmatrix}, 
    \end{align*}
    where $\kappa:= -\nu -\alpha^{-1}\beta\gamma +\delta $.
    Here, $K_n(t):= ( 4 \pi t)^{-\frac{n}{2}} \e^{ \frac{-|k|^2}{4t} }$ denotes the $n$-dimensional heat kernel, where
    $*$ is the usual convolution of measurable functions.
\end{lemma}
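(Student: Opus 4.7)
The plan is to use the algebraic constraint to eliminate $u^0$, reducing the limit system to a single linear parabolic PDE for $v^0$, and then to solve that equation by the spatial Fourier transform.

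First, since $\alpha<0$ in particular $\alpha\neq 0$, the first equation $0=\alpha u^0(t)+\beta v^0(t)$ is trivially solved by $u^0(t) = -\alpha^{-1}\beta v^0(t)$ for every $t\geq 0$, which already gives the first component of the claimed formula and is automatically consistent with the initial condition $u^0(0) = -\alpha^{-1}\beta v_0$. Substituting this into the second equation of \eqref{linear limit system} eliminates $u^0$ and yields
\begin{equation*}
\partial_t v^0(t) \;=\; \Delta v^0(t) + \bigl(-\nu - \alpha^{-1}\beta\gamma + \delta\bigr) v^0(t) \;=\; \Delta v^0(t) + \kappa\, v^0(t),
\end{equation*}
with initial datum $v^0(0)=v_0\in H^2(\R^n)$. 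So the task reduces to solving this heat equation with a constant zeroth-order term.

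Next, I would apply the spatial Fourier transform $\mathcal{F}$ and write $\widehat{v^0}(t,k) := \mathcal{F}(v^0(t,\cdot))(k)$. Since $\mathcal{F}$ converts $\Delta$ into multiplication by $-|k|^2$, the PDE becomes the pointwise-in-$k$ linear ODE
\begin{equation*}
\partial_t \widehat{v^0}(t,k) \;=\; \bigl(\kappa - |k|^2\bigr)\widehat{v^0}(t,k), \qquad \widehat{v^0}(0,k) = \widehat{v_0}(k),
\end{equation*}
whose explicit solution is $\widehat{v^0}(t,k) = \e^{\kappa t}\e^{-|k|^2 t}\widehat{v_0}(k)$. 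Because $\e^{-|k|^2 t}$ is (up to a standard normalisation convention) the Fourier transform of the heat kernel $K_n(t)$, inverting and using that multiplication in Fourier space corresponds to convolution in physical space produces $v^0(t) = \e^{\kappa t}\, K_n(t)*v_0$, which is exactly the claimed formula.

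There is essentially no serious obstacle in this argument: the hypothesis $v_0\in H^2(\R^n)\hookrightarrow L^2(\R^n)$ makes both the Fourier transform and the heat-kernel convolution classically well-defined, and the scalar factor $\e^{\kappa t}$ does not interfere with regularity. The only remaining check, which one may either perform directly or subsume into a standard uniqueness statement for the linear heat equation with a constant zeroth-order term, is that the candidate pair $(u^0,v^0)$ from the displayed formula satisfies both the algebraic constraint and the evolution equation together with the prescribed initial data; this follows by a direct substitution, closing the proof.
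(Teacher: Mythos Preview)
Your proof is correct and follows essentially the same route as the paper: solve the algebraic constraint for $u^0$, substitute into the $v^0$-equation, apply the spatial Fourier transform to reduce to a scalar linear ODE in $k$, solve it, and invert using the heat kernel. The only cosmetic difference is the Fourier normalisation (the paper uses the convention $\mathcal{F}(\Delta v)=-4\pi^2|k|^2\,\mathcal{F}v$), which you already flag.
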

\begin{proof}
    Since $\alpha \neq 0$ the first equation of \eqref{linear limit system} gives $u^0(t) = - \alpha^{-1} \beta v^0(t)$. 
   The Fourier transform to some integrable function $w=w(t,x)$ is defined as
    $$
    (\F w)(t):= \int_{\R^n} w(t,x) \e^{- 2 \pi i k x} dx. $$
    Applying this to the equation for $v^0$ yields
\begin{equation*}
    \partial_t (\F v^{0})(t) = (-4 \pi^2 |k|^2 - \nu -\alpha^{-1}\beta \gamma + \delta ) \F v^{0}(t) 
\end{equation*}
for all $k \in \R^n$, since $\F( \Delta v^0) = - 4 \pi^2 |k|^2 \F(v^0)$. 
So, we can infer that
\begin{equation*}
    (\F v^{0})(t) = \e^{(-4 \pi^2 |k|^2 + \kappa ) t} (\F v_0),  
\end{equation*}
where $\kappa:= -\nu -\alpha^{-1}\beta\gamma +\delta$ and
which leads upon an inverse Fourier transformation to the claimed form.
Recall that the space $H^2(\R^n)$ can be defined via Fourier transforms
$$
H^2(\R^n):=\{ f\in L^2(\R^n)\,:\, (1+|k|^2) \F f\in L^2(\R^n)\}.
$$
Hence applying Plancherel's identity in the upcoming estimates it is sufficient to show that $\| (1+|k|^2) \F f\|_{L^2(\R^n)}$ remains bounded.
Then, from the regularity of the heat kernel and the initial data $v_0$ it follows that $v^0(t),u^0(t) \in H^2(\R^n)$.

\end{proof}

\begin{lemma} \label{solution lemma of linear system}
For small enough $\eps>0$ the explicit solution to the complete system \eqref{linear system} is given by
\begin{align*}
    \begin{pmatrix}
u^\eps(t) \\ 
v^\eps(t)
\end{pmatrix} 
= 
\begin{pmatrix} 
    [ \rho_1^\eps L^\eps_+(t) + \rho_2^\eps L^\eps_-(t) ] * u_0 + 
    [ \rho_3^\eps L^\eps_+(t) + \rho_4^\eps L^\eps_-(t) ] * v_0 \\ 
    [ \rho_5^\eps L^\eps_+(t) + \rho_6^\eps L^\eps_-(t) ] * u_0 + 
    [ \rho_7^\eps L^\eps_+(t) + \rho_8^\eps L^\eps_-(t) ] * v_0
    \end{pmatrix},
\end{align*}
with 
$$L^\eps_\pm(t):=\e^{ ( \epsi \alpha  - \mu + \delta - \nu \pm \Omega^\eps ) \frac{t}{2}} K_n(t)$$
and where 
$$\Omega^\eps := \sqrt{(\delta  - \nu - \epsi \alpha + \mu)^2 + 4 \epsi \beta \gamma} \, \in \R \backslash \{0\}.$$
The coefficients $\rho^\eps_i \in \R$ for $ i \in \{1,...,8\} $ depend only on the constants $\alpha,\beta,\gamma,\delta,\mu,\nu$ and the parameter $\eps$.
\end{lemma}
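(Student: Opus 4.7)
The plan is to reduce \eqref{linear system} to a pointwise $2\times 2$ linear ODE in frequency space, diagonalize the resulting constant-coefficient matrix, and then invert the Fourier transform to recover the claimed convolution structure.

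First, I would divide the fast equation by $\eps$ and write the system in matrix form $\partial_t(u^\eps,v^\eps)^\top = \mathcal{L}(u^\eps,v^\eps)^\top$. Taking the spatial Fourier transform $\F$ converts $\Delta$ into multiplication by $-4\pi^2|k|^2$ and yields, for each $k\in\R^n$, a $2\times 2$ linear ODE $\partial_t \hat W(t,k) = M(k)\hat W(t,k)$ with
$$
M(k) = -4\pi^2|k|^2\, I + M_0,\qquad M_0:=\begin{pmatrix} \epsi\alpha-\mu & \epsi\beta\\ \gamma & \delta-\nu\end{pmatrix}.
$$
Since the scalar part $-4\pi^2|k|^2\, I$ commutes with $M_0$, the semigroup factors as $e^{M(k)t} = e^{-4\pi^2|k|^2 t}\,e^{M_0 t}$, which isolates the entire $k$-dependence into the familiar heat factor.

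Next, I would compute the eigenvalues of $M_0$, which are $\tilde\lambda_\pm^\eps = \tfrac12(\epsi\alpha-\mu+\delta-\nu\pm\Omega^\eps)$ with $\Omega^\eps$ exactly as in the statement. Expanding $(\Omega^\eps)^2 = \epsi^2\alpha^2 + 2\epsi\bigl(2\beta\gamma - \alpha(\delta-\nu+\mu)\bigr) + (\delta-\nu+\mu)^2$, the leading term $\epsi^2\alpha^2$ dominates for all sufficiently small $\eps>0$ (using $\alpha\neq 0$), so $\Omega^\eps\in\R\setminus\{0\}$ and the two eigenvalues are distinct; this is precisely the role of the smallness hypothesis on $\eps$. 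With distinct eigenvalues the spectral projector formula gives
$$
e^{M_0 t} = \frac{(M_0-\tilde\lambda_-^\eps I)\,e^{\tilde\lambda_+^\eps t} - (M_0-\tilde\lambda_+^\eps I)\,e^{\tilde\lambda_-^\eps t}}{\Omega^\eps},
$$
so each of its four entries is a linear combination $\rho_a^\eps\, e^{\tilde\lambda_+^\eps t} + \rho_b^\eps\, e^{\tilde\lambda_-^\eps t}$ with real coefficients depending only on $\alpha,\beta,\gamma,\delta,\mu,\nu,\eps$ and \emph{not} on $k$. Multiplying through by the scalar $e^{-4\pi^2|k|^2 t}$ and using $\F^{-1}\bigl(e^{-4\pi^2|k|^2 t}e^{\tilde\lambda_\pm^\eps t}\bigr) = L^\eps_\pm(t)$, the products against $\F u_0$ and $\F v_0$ in frequency become convolutions against $u_0$ and $v_0$ in physical space; reading off the eight resulting scalar prefactors defines $\rho_1^\eps,\dots,\rho_8^\eps$ and matches the formula.

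The main obstacle is the strict non-vanishing and realness of $\Omega^\eps$: if $(\Omega^\eps)^2$ vanished or became negative, the spectral projector formula would break down and one would have to invoke a Jordan-block correction or split real and oscillatory regimes. Fortunately the $\epsi^2\alpha^2$ term forces $\Omega^\eps \sim \epsi|\alpha|\to\infty$ as $\eps\to 0$, so this is resolved by the smallness assumption. Everything else is routine bookkeeping of the eight $\rho_i^\eps$, and $H^2(\R^n)$-regularity of $(u^\eps(t),v^\eps(t))$ at each $t>0$ follows, exactly as in the previous lemma, from Plancherel combined with the rapid decay of $e^{-4\pi^2|k|^2 t}$ and the assumed $H^2$-regularity of $u_0,v_0$.
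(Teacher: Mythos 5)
Your proposal is correct and follows essentially the same route as the paper: Fourier transform to a $2\times 2$ constant-coefficient ODE per mode, exploit that the two distinct eigenvalues differ by $\Omega^\eps$ for small $\eps$, and invert the transform to get the convolution formula. Your explicit splitting $M(k)=-4\pi^2|k|^2 I+M_0$ with the spectral projector for $e^{M_0t}$ is a slightly cleaner packaging than the paper's direct diagonalization of the full $k$-dependent matrix (it makes manifest that the $\rho_i^\eps$ are $k$-independent and avoids any discussion of exceptional modes), but the underlying computation is identical.
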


\begin{proof}
First, applying the Fourier transform to the linear system (\ref{linear system}) yields for $t>0$
\begin{align} \label{linear complete system fourier}
    \begin{split}
        \eps \partial_t (\F u^{\eps})(t) &= (- \eps 4 \pi^2 |k|^2 - \mu \eps +  \alpha) (\F u^{\eps} )(t)  +  \beta (\F v^{\eps} )(t), \\
            \partial_t (\F v^{\eps} )(t) &= \gamma(\F u^{\eps} )(t)  + (\delta - 4 \pi^2 |k|^2- \nu) (\F v^{\eps} )(t) \\
                   (\F u^\eps)(0) &= (\F u_0) \in H^2(\R^n), \quad (\F v^\eps)(0) = (\F{v_0}) \in H^2(\R^n)
    \end{split} 
\end{align}
for all $k\in\R^n$. 
To simplify the notation write the above system as a two-dimensional ODE for each Fourier coefficient $k$, where we set
\begin{equation*}
    w_k^\eps(t):=
    \begin{pmatrix}
    (\F u^\eps)(t) \\ ( \F v^\eps)(t)
    \end{pmatrix}  \in \R^2.
\end{equation*} 
Then, multiplying the first equation of system (\ref{linear complete system fourier}) by $\epsi$, we can reformulate the system as 
$$\partial_t w_k^\eps(t) = M_k^\eps w_k^\eps(t) $$ 
where
\begin{equation*}
M_k^\eps := 
\begin{pmatrix}
\epsi \alpha - \mu - 4 \pi^2 |k|^2 & \epsi \beta  \\
\gamma & \delta -\nu - 4 \pi^2 |k|^2  \\
\end{pmatrix} \in \R^{2 \times 2}.
\end{equation*}
The eigenvalues of $M^\eps_k$ are given by
$$ 
\lambda^\eps_{\pm}(k) := \frac{1}{2}\left(\epsi \alpha - \mu + \delta -\nu - 8\pi^2 |k|^2  \mp \Omega^\eps \right) \in \R.
$$
with
$$ 
\Omega^\eps := \sqrt{(\delta  - \nu - \epsi \alpha + \mu)^2 + 4 \epsi \beta \gamma}.
$$
Regardless of the values of the parameters, one can always ensure that 
$$(\delta  -\nu   - \epsi \alpha + \mu )^2> - 4\epsi \beta \gamma$$
for all sufficiently small $\eps$. 
Hence, we can assume that $\eps$ is small enough such that $\Omega^\eps \in \R_+ $ and $\lambda^\eps_{\pm} \in \R$.
Now, if $|k|^2 = \frac{ \epsi \alpha -  \mu + \delta  -\nu \mp \Omega^\eps}{8 \pi^2 } =: \rho^\eps_\pm$ we have $\lambda^\eps_{\pm}(k)=0$. But since the sets 
$$ 
Q_\pm:=\{ k \in \R^n : |k|^2 = \rho^\eps_\mp \}
$$
are $(n-1)$-dimensional spheres of radius $\sqrt{\rho_\pm^\eps}$ if $\rho_i^\eps>0$, $Q_\pm$ are null sets with respect to the Lebesgue measure in $\R^n$. 
So in the following we can assume that 
we have non-zero eigenvalues for almost every $k\in \R$.\\
Therefore, we can derive a formula for the solution of system (\ref{linear complete system fourier}) using matrix diagonalization of $M_k^\eps$, i.e. we can write for almost every $k$
$$ 
M^\eps_k = S^\eps J^\eps_k ({S^\eps})^{-1} 
$$ 
with
\begin{align*} 
S^\eps&:=
\frac{1}{2\gamma}\begin{pmatrix}
\epsi \alpha -\mu-\delta + \nu  -\Omega^\eps & \epsi \alpha -\mu-\delta + \nu  +\Omega^\eps \\ 2\gamma & 2\gamma
\end{pmatrix}  \in \R^{2 \times 2}
\intertext{ and }
J^\eps_k&:=
\begin{pmatrix} \lambda^\eps_{-}(k) & 0\\0 & \lambda^\eps_{+}(k)
\end{pmatrix} \in \R^{2 \times 2}. 
\end{align*} 
Using the diagonalization the solution of system (\ref{linear complete system fourier}) can be written as 
$$ 
\begin{pmatrix}
(\F u^\eps)(t) \\
(\F v^\eps)(t)
\end{pmatrix} = S^\eps  \e^{J_k^\eps t} (S^\eps)^{-1} \begin{pmatrix}
(\F u_0) \\ (\F v_0)
\end{pmatrix}  .
$$ 
After performing the matrix multiplications we get 
\begin{align*}
(\F u^\eps)(t) = \frac{1}{{2 \Omega^\eps}} \bigg(&
{( \epsi \alpha -\mu- \delta +\nu + \Omega^\eps )}  \e^{\lambda^\eps_-(k) t} \\
&- ({ \epsi \alpha -\mu- \delta +\nu - \Omega^\eps}) \e^{\lambda^\eps_+(k) t}  
\bigg) (\F u_0) \\ 
- \frac{\epsi \beta }{  \Omega^\eps} \bigg(& 
\e^{\lambda_-^\eps(k) t} -  \e^{\lambda_+^\eps(k) t} 
\bigg) (\F v_0) 
\intertext{and}
(\F v^\eps)(t) = \frac{\gamma}{ \Omega^\eps }  \bigg(&  \e^{\lambda^\eps_-(k) t} - \e^{\lambda^\eps_+(k) t} \bigg) (\F u_0) \\ 
 + \frac{1}{{2 \Omega^\eps}} \bigg( & { -( \epsi \alpha -\mu - \delta +\nu + \Omega^\eps )} \e^{\lambda^\eps_-(k) t } \\
 &- {( \epsi \alpha -\mu- \delta +\nu- \Omega^\eps)} \e^{\lambda^\eps_+(k) t} \bigg) (\F v_0). 
\end{align*}
Note that except for the initial conditions $\F u_0$ and $\F v_0$, the only term in the above expressions that depends on $k$ is the exponential term $\e^{\lambda^\eps_{\pm}(k)t }$. Thus, all that remains is to perform the inverse Fourier transform of this function, which evaluates to be $L^\eps_\pm(t)$. By the linearity of the Fourier transform, we obtain the claimed solution for system \eqref{linear system}. Since the Fourier transform involves integration over $\R^n$, we can neglect that the solution formula for the Fourier transform is valid only for almost every $k \in \R^n$. \\
Moreover, we observe that $(1+|k|^2)\rho_i^\eps e^{\lambda^\eps_\pm(t)} \F (u_0)\in L^2(\R^n)$ for $t>0$ and, respectively, the same holds for $v_0$.
Hence the solution satisfies $u^\eps(t), v^\eps(t)\in H^2(\R^n)$.
\end{proof}
\begin{remark}
We note that all coefficients $\rho_i^\eps$ introduced in the previous proof are well defined in the limit as $\eps \to 0$.
Indeed, we have
$\lim_{\eps \to 0}(\Omega^\eps)^{-1}= 0 $ $\lim_{\eps \to 0}\epsi (\Omega^\eps)^{-1}= \alpha^{-1}$, which yields the desired bounds for the coefficients.
\end{remark}
\subsection{Convergence of solutions}
As in \cite{hummel2022slow} and \cite{kuehn2023fast} in order to solve the limit system \eqref{linear limit system} we have introduced a function 
$$h^0: H^2(\R^n) \to H^2(\R^n),\, y \mapsto h^0(y)$$ 
such that the critical manifold $S_0$ can be written as
\begin{align*}
    S_0=\{ (h^0(y),y): y\in H^2(\R^n)\}.
\end{align*} 
In the case of system \eqref{linear limit system} the function $h^0$ is linear and given by $h^0(v^0) := \alpha ^{-1} \beta v^0 = u^0$.\\
In addition, we observe that the two systems
are autonomous and hence we can write the solutions $(u^\eps,v^\eps)$ and $(u^0,v^0)$ as semi-flows. 
That is, there exist continuous mappings
\begin{align*}
    T_\eps : [0,T] \times H^2(\R^n) \times H^2(\R^n) \to H^2(\R^n)\times H^2(\R^n) \text{ and } T_0:[0,T] \times S_0 \to S_0
\end{align*}
for any $T>0$ such that the solutions can be written as
\begin{align*}
    \begin{pmatrix}
    u^\eps(t) \\ v^\eps(t)
    \end{pmatrix} = T_\eps (t) \begin{pmatrix}
    u_0 \\ v_0
    \end{pmatrix} \text{ and } \begin{pmatrix}
    u^0(t) \\ v^0(t)
    \end{pmatrix} = T_0(t)\begin{pmatrix}
    h^0(v_0) \\ v_0
    \end{pmatrix}.
\end{align*}

The following theorem provides a convergence result of the semi-flows of the linear system \eqref{linear system} towards solutions of the limit system \eqref{linear limit system} as $\eps \to 0$. 

\begin{theorem} \label{main main theorem}
    Let $\eps_0>0$ be small enough such that $\epsi_0 \alpha-\mu<0$ and $T>0$. Then, for all $\eps\in (0,\eps_0]$ there exist a positive increasing function $C_{\nu,\kappa}(t) $ depending on the parameters $\nu$ and $\kappa:= -\nu -\alpha^{-1}\beta\gamma +\delta$ such that
    \begin{align*}
    &\left\| 
    T_{\eps}(t) \begin{pmatrix}
    u_0  \\ v_0  \end{pmatrix} 
    - T_{0}(t) \begin{pmatrix}
    h^0(v_0)  \\ v_0 \end{pmatrix} 
    \right\|_{ H^2(\R^n) \times H^2(\R^n) } 
    \leq \\
    &\quad \leq \eps C_{\nu,\kappa}(t) \bigg( \|u_0 - h^0(v_0)\|_{H^2(\R^n)} + \|v_0\|_{H^2(\R^n)}\bigg), 
    \end{align*}
for all $t\in (0,T)$.
In the case $\nu>0$ and $\kappa<0$ we can set $C_{\nu,\kappa}(t)=C>0$.
\end{theorem}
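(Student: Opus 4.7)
The plan is to work in Fourier space, using the explicit representations of the solutions from the two preceding lemmas. By Plancherel's theorem, $\|f\|_{H^2(\R^n)} = \|(1+|k|^2) \F f\|_{L^2(\R^n)}$, so the claimed $H^2 \times H^2$ bound reduces to pointwise-in-$k$ estimates on $\F u^\eps(t) - \F u^0(t)$ and $\F v^\eps(t) - \F v^0(t)$, weighted by $(1+|k|^2)^2$ and integrated over $\R^n$.

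The key input is an asymptotic expansion of $\Omega^\eps$. Setting $A^\eps := \delta - \nu - \epsi\alpha + \mu$, the hypothesis $\epsi_0\alpha - \mu < 0$ ensures $A^\eps > 0$ for $\eps \in (0, \eps_0]$ with $A^\eps \sim |\alpha|/\eps$, so
\begin{equation*}
\Omega^\eps = A^\eps\sqrt{1 + \tfrac{4 \epsi \beta\gamma}{(A^\eps)^2}} = A^\eps + \tfrac{2 \epsi \beta\gamma}{A^\eps} + O(\eps).
\end{equation*}
Substituting this into the eigenvalues yields uniformly in $k \in \R^n$ the asymptotics $\lambda_-^\eps(k) = \kappa - 4\pi^2|k|^2 + O(\eps)$ (matching the Fourier symbol of the limit generator) and $\lambda_+^\eps(k) \leq \epsi\alpha + C - 4\pi^2|k|^2$ for a parameter-dependent constant $C$ (the strongly contracting fast mode). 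The eigenvectors of $M_k^\eps$ converge to $(-\alpha^{-1}\beta, 1)^T$---precisely the tangent direction of the critical manifold $S_0$---and $(1, 0)^T$; decomposing the initial datum along them shows that the slow-mode amplitude equals $\F v_0 + O(\eps) \F(u_0 - h^0(v_0))$ and the fast-mode amplitude equals $\F(u_0 - h^0(v_0)) + O(\eps) \F v_0$.

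For the slow-mode contribution, the elementary inequality $|e^a - e^b| \leq |a-b|\max(e^a, e^b)$ together with $\lambda_-^\eps(k) - (\kappa - 4\pi^2|k|^2) = O(\eps)$ uniformly in $k$ gives
\begin{equation*}
\bigl|e^{\lambda_-^\eps(k) t} - e^{(\kappa - 4\pi^2|k|^2)t}\bigr| \leq \eps M\, t\, e^{(\kappa + \eps M)t}\, e^{-4\pi^2|k|^2 t}
\end{equation*}
for a constant $M$ depending on the parameters. Multiplying by $(1+|k|^2)|\F v_0|$ and taking $L^2$-norms in $k$ yields an $H^2$ contribution of size $\eps\, M\, t\, e^{(\kappa + \eps M)t}\|v_0\|_{H^2(\R^n)}$; the $O(\eps)$ mixing corrections produce analogous bounds.

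The main obstacle is the fast-mode term $\F(u_0 - h^0(v_0))\, e^{\lambda_+^\eps(k) t}$ appearing in $\F u^\eps - \F u^0$, which is not pointwise $O(\eps)$---at $t = 0^+$ it reproduces the full initial discrepancy. Since $\lambda_+^\eps(k) \leq \epsi\alpha + C$ with $\alpha < 0$, however, $e^{\lambda_+^\eps(k) t}$ is smaller than any power of $\eps$ once $t$ is bounded below, so the ratio $e^{\lambda_+^\eps(k) t}/\eps$ is what the prefactor $C_{\nu,\kappa}(t)$ must absorb. Assembling slow and fast contributions yields the claimed bound; finally, when $\kappa < 0$ and $\nu > 0$ the exponentials $e^{\kappa t}$ and $e^{-\nu t}$ are uniformly bounded on $[0, \infty)$, allowing $C_{\nu,\kappa}$ to be taken constant.
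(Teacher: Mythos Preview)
Your argument is correct but takes a genuinely different route from the paper. The paper never returns to the explicit diagonalization of Lemma~\ref{solution lemma of linear system}; instead it introduces two auxiliary fast equations $u^{\eps,0}$ and $\tilde u^\eps$ (both driven by the limiting slow variable $v^0$), splits $\|u^\eps - h^0(v^0)\|_{H^2}$ into three pieces by the triangle inequality, bounds each piece separately via variation of constants in Fourier space, feeds the resulting inequality for $u^\eps$ back into an integral inequality for $\sup_{s\le t}\|v^\eps(s)-v^0(s)\|_{H^2}$, and closes it with Gronwall. Your approach is more direct for this particular linear problem: having already computed the eigenvalues and eigenvectors of $M_k^\eps$, you expand $\Omega^\eps$, identify $\lambda_-^\eps(k)$ with the limit symbol $\kappa-4\pi^2|k|^2$ up to $O(\eps)$, recognize $\lambda_+^\eps(k)\le \epsi\alpha+C$ as the contracting fast rate, and decompose the initial data along the slow and fast eigendirections (which is exactly why the combination $u_0-h^0(v_0)$ emerges as the fast amplitude). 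What the paper's longer route buys is pedagogical: it rehearses, in a setting where everything can be checked by hand, the auxiliary-system-plus-Gronwall machinery that is the only available tool in the nonlinear situation of~\cite{kuehn2023fast}, in line with the expository aim of the note. Both arguments end at the same residual fast-layer term of order $\epsi\e^{(\epsi\alpha-\mu)t}\|u_0-h^0(v_0)\|_{H^2}$ that has to be absorbed into $C_{\nu,\kappa}(t)$, and both leave the monotonicity of that prefactor somewhat implicit.
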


\subsection*{Proof of Theorem \ref{main main theorem}}

The proof of Theorem \ref{main main theorem} is split into several steps.
The overall idea is to introduce two auxiliary systems that in some sense interpolate between the equation of the fast variable and its algebraic limit.  Then, by using a variation of constants method in combination with the Fourier transform we obtain estimates for the Fourier coefficients and thus for the solutions themselves. 

In the first step we introduce the two approximate systems in the fast component $u^\eps$.
That is, we have $u^{\eps,0}$ as the solution to
\begin{align}\label{linear extended system}
    \eps \partial_t u^{\eps,0}(t) &= \eps (\Delta - \mu I)  u^{\eps,0}(t) + \alpha u^{\eps,0}(t) + \beta v^0(t) + \eps ( \partial_t - \Delta + \mu I) h^0 (v^0(t))
\end{align}
and $\tilde u^\eps$ solving
\begin{align}\label{linear fast tilde equation}
     \eps \partial_t \tilde u^{\eps}(t) &= \eps (\Delta -  \mu I) \tilde   u^{\eps}(t) + \alpha \tilde  u^{\eps}(t) + \beta v^0(t),
\end{align}
where in both cases the equation in the slow variable $v$ is given by
\begin{align} \label{v0}
     \partial_t v^0(t) &= (\Delta - \nu I) v^{0}(t) - \gamma \beta \alpha^{-1} v^0(t) + \delta v^{0}(t)
\end{align}
and the initial data satisfy
\begin{align*}
    u^{\eps,0}(0)=u_0,\quad \tilde  u^\eps (0)=u_0,\quad v^0(0)= v_0.
\end{align*}
The motivation behind introducing this approximate system $u^{\eps,0}$  is that the solution gets close to the solution of the limit system \eqref{linear limit system}.
A formal way to see this is to consider the different powers of $\eps$ in the equation.
In $\mathcal{O}(1)$ we have
$$ 
    \alpha u^{\eps,0}(t) + \beta v^0(t) =0 
$$
and in $\mathcal{O}(\eps)$ we get 
$$
    \partial_t (u^{\eps,0} - h^0(v^0(t))) = (\Delta - \mu I) ( u^{\eps,0} - h^0(v^0(t))) 
$$
which can be viewed as $\alpha u^{\eps,0} + \beta v^0 = 0$, since for small $\eps$ it holds that $ u^{\eps,0} \approx h^0(v^0)$. Thus, in both orders of $\eps$ we obtain an equation corresponding to an equation characterizing the critical manifold of system \eqref{linear limit system}.

The well-posedness of the auxiliary systems follows from using the linearity of the systems and by noting that the equation in $v$ is independent of the fast variable $u$. 
Therefore, one can first solve the $v$-equation and then the equation in $u^{\eps,0}$ and $\tilde u^\eps$ respectively. \\

The next step of the proof is to estimate the fast variable. 
By applying a triangle inequality we have
\begin{align*}
   \| u^\eps -h^0(v^0)  \|_{H^2(\R^n)} \!\leq   \| u^\eps - \tilde {u}^{\eps} \|_{H^2(\R^n)} \!+   \| \tilde {u}^\eps  - u^{\eps,0} \|_{H^2(\R^n)} \!+ \| u^{\eps,0} - h^0(v^0) \|_{H^2(\R^n)}.
\end{align*}
In the following proposition we give further estimates for each term. 

\begin{proposition} \label{auxillary lemma}
Let $\alpha<0$ and assume that $\eps>0$ is small enough such that $\epsi \alpha < \mu$. Then if $u^\eps$ is the solution of equation \eqref{linear system} and $\tilde {u}^\eps $ the solution to equation \eqref{linear fast tilde equation} it holds for $t \geq 0$

\begin{equation*}
    \| u^\eps(t) - \tilde {u}^\eps(t) \|_{H^2(\R^n)} 
    \leq  C  \sup_{s \in [0,t]} \|v^\eps(s)- v^0(s) \|_{H^2(\R^n)} , 
\end{equation*}
for some constant $C>0$.
\end{proposition}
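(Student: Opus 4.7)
The plan is to analyze the difference $w^\eps := u^\eps - \tilde u^\eps$ directly. Subtracting \eqref{linear fast tilde equation} from the $u$-equation of \eqref{linear system} and using the shared initial data $u_0$, the function $w^\eps$ satisfies
\begin{equation*}
\eps \partial_t w^\eps = \eps (\Delta - \mu I) w^\eps + \alpha w^\eps + \beta (v^\eps - v^0), \qquad w^\eps(0) = 0.
\end{equation*}
Dividing by $\eps$ recasts this as
\begin{equation*}
\partial_t w^\eps = \bigl[ \Delta + (\epsi \alpha - \mu) I \bigr] w^\eps + \epsi \beta (v^\eps - v^0),
\end{equation*}
which is an inhomogeneous heat-type equation with a zero-order shift. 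Under the standing hypothesis $\epsi \alpha < \mu$, the shift satisfies $\epsi\alpha - \mu < 0$, so the operator in brackets is dissipative on $H^2(\R^n)$ and generates a decaying semigroup. The forcing term $\epsi \beta (v^\eps - v^0)$ encodes precisely the discrepancy driven by the slow variables, so $w^\eps$ inherits its size from this source.

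The second step is to represent $w^\eps$ by Duhamel's formula and pass to Fourier space, exactly in the spirit of Lemma \ref{solution lemma of linear system}. The semigroup acts on each Fourier mode $k$ by multiplication by $\e^{(-4\pi^2 |k|^2 + \epsi \alpha - \mu)(t-s)}$, which is bounded pointwise in $k$ by $\e^{-(\mu - \epsi \alpha)(t-s)}$. Crucially, this multiplier commutes with multiplication by $(1+|k|^2)$, so Plancherel's identity upgrades the bound automatically from $L^2$ to $H^2$, yielding
\begin{equation*}
\| w^\eps(t) \|_{H^2(\R^n)} \leq \epsi |\beta| \int_0^t \e^{-(\mu - \epsi \alpha)(t-s)} \| v^\eps(s) - v^0(s) \|_{H^2(\R^n)} \, ds.
\end{equation*}

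Finally, pulling the supremum of the forcing out of the integral and evaluating
\begin{equation*}
\int_0^t \e^{-(\mu - \epsi \alpha)(t-s)}\, ds \leq \frac{1}{\mu - \epsi \alpha},
\end{equation*}
the prefactor $\epsi |\beta|/(\mu - \epsi \alpha)$ simplifies to $|\beta|/(\eps \mu - \alpha)$. Because $\alpha < 0$ and the hypothesis forces $\eps \mu - \alpha > 0$, this quantity is uniformly bounded for $\eps \in (0, \eps_0]$ by a constant $C>0$ depending only on $\alpha$, $\beta$, $\mu$ and $\eps_0$, giving the claimed estimate. The only real subtlety is checking that the semigroup bound holds at the $H^2$-level rather than merely on $L^2$, but on the Fourier side this is automatic; no independent regularity argument is required and the $\epsi$ in front of $\beta$ is crucially absorbed by the $\eps$ gained from the small denominator $\mu - \epsi\alpha$, which is what keeps the constant uniform in $\eps$.
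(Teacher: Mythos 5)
Your proof is correct and follows essentially the same route as the paper: Fourier transform, variation of constants (Duhamel), the pointwise bound $\e^{(-4\pi^2|k|^2-\mu+\epsi\alpha)(t-s)}\leq \e^{(-\mu+\epsi\alpha)(t-s)}$, Plancherel at the $H^2$ level, and the evaluation of the resulting time integral; working with the difference $w^\eps=u^\eps-\tilde u^\eps$ rather than subtracting two separate Duhamel formulas is only a cosmetic reorganization. Your bookkeeping of the factor $\epsi|\beta|/(\mu-\epsi\alpha)=|\beta|/(\eps\mu-\alpha)$ and its uniform boundedness for $\alpha<0$ is in fact slightly cleaner than the paper's intermediate display, which drops an $\epsi$ before arriving at the same conclusion.
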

\begin{proof}
Applying the Fourier transform to the first equation in \eqref{linear system} and multiplying by $\epsi$ gives 
$$ 
    \partial_t (\F u^\eps)(t) = (- 4 \pi^2 |k|^2 - \mu + \epsi \alpha) (\F u^\eps)(t) + \epsi \beta (\F v^\eps)(t)
$$

for $k \in \R^n$. By variation of constants we have 
$$ 
    (\F u^\eps)(t) = \e^{(-4 \pi^2 |k|^2 - \mu + \epsi \alpha)t} (\F u_0) + \epsi \beta \int_{0}^t \e^{(-4 \pi^2 |k|^2 - \mu + \epsi \alpha)(t-s)} (\F v^\eps)(s) \textnormal{d} s.
$$ 
On the other hand,
$$ 
    (\F \tilde {u}^\eps)(t) = \e^{(-4 \pi^2 |k|^2 - \mu + \epsi \alpha)t} (\F u_0) + \epsi \beta \int_{0}^t \e^{(-4 \pi^2 |k|^2 - \mu + \epsi \alpha)(t-s)} (\F v^0)(s) \textnormal{d} s.
$$ 
So by Plancherel's theorem and a generalized H\"older's inequality we obtain
\begin{align*}
    \|  u^\eps(t) -  \tilde {u}^\eps(t) \|_{H^2(\R^n)} & =    \| (1+|k|^2)\big((\F u^\eps)(t)- (\F \tilde {u}^\eps)(t)\big) \|_{L^2(\R^n)} \\  
    &\leq |\beta| \epsi \int_{0}^t \bigg(\| \e^{(-4 \pi^2 |k|^2 - \mu+ \epsi \alpha)(t-s)} \|_{L^\infty(\R^n)} \\
    & \qquad \qquad \qquad \times \| (1+|k|^2)\big((\F v^\eps)(s) - (\F v^0)(s) \big)\|_{L^2(\R^n)}\bigg) \textnormal{d} s \\
    &\leq |\beta| \sup_{s \in [0,t]} \|  (1+|k|^2)\big((\F v^\eps)(s) - (\F v^0)(s) \big) \|_{L^2(\R^n)} \\
    &\quad \times \epsi \int_{0}^t \e^{ (- \mu + \epsi \alpha)(t-s)} \textnormal{d} s \\
    & =  |\beta| \sup_{s \in [0,t]}  \| (1+|k|^2)\big((\F v^\eps)(s) - (\F v^0)(s) \big) \|_{L^2(\R^n)} \\ 
    &\quad \times \frac{1- \e^{(-\mu + \epsi \alpha)t} }{\mu -\epsi \alpha} \\
    & \leq \frac{ | \beta | }{\mu -\epsi \alpha} \sup_{s \in [0,t]}  \| v^\eps (s) - v^0(s) \|_{H^2(\R^n)}\\
    & \leq C \sup_{s \in [0,t]}  \| v^\eps (s) - v^0(s) \|_{H^2(\R^n)}.
\end{align*}
\end{proof}

\begin{proposition}
    Let $\alpha<0$ and assume that $\eps>0$ is small enough such that $- \mu + \epsi \alpha < \kappa :=  - \nu -\beta\alpha^{-1} \gamma + \delta$.
    Then for $t \geq 0$ there is a $C>0$ such that
    \begin{align*}
        \| u^{\eps,0}(t)- \tilde  u^\eps(t)\|_{H^2(\R^n)}\leq \eps C \textnormal{e}^{\kappa t} \|v_0\|_{H^2(\R^n)}.
    \end{align*}
\end{proposition}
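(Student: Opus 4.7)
The plan is to derive a linear parabolic equation for the difference $w^\eps := u^{\eps,0} - \tilde u^\eps$ and control it via Fourier analysis and Duhamel's formula, mirroring the strategy used for Proposition 2.3. Subtracting \eqref{linear fast tilde equation} from \eqref{linear extended system} gives
\begin{equation*}
    \eps \partial_t w^\eps = \eps(\Delta - \mu I) w^\eps + \alpha w^\eps + \eps F(t), \qquad w^\eps(0) = 0,
\end{equation*}
where $F(t) := (\partial_t - \Delta + \mu I) h^0(v^0(t))$, and the initial datum vanishes because $u^{\eps,0}$ and $\tilde u^\eps$ both start at $u_0$.

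I would first simplify $F$ explicitly. Since $h^0$ is linear with $h^0(v^0) = -\alpha^{-1}\beta v^0$, and equation \eqref{v0} collapses to $\partial_t v^0 = \Delta v^0 + \kappa v^0$ by the very definition of $\kappa$, one computes
\begin{equation*}
    F(t) = -\alpha^{-1}\beta\bigl(\partial_t v^0 - \Delta v^0 + \mu v^0\bigr) = -\alpha^{-1}\beta(\kappa + \mu)\, v^0(t).
\end{equation*}
In particular $\|F(s)\|_{H^2(\R^n)} \leq C\|v^0(s)\|_{H^2(\R^n)}$ for a constant $C$ independent of $\eps$ and $s$. Using the explicit formula $\F v^0(s,k) = \e^{(-4\pi^2|k|^2 + \kappa)s}\F v_0(k)$ from the limit-system lemma and bounding $\e^{-4\pi^2|k|^2 s} \leq 1$ gives $\|v^0(s)\|_{H^2(\R^n)} \leq \e^{\kappa s}\|v_0\|_{H^2(\R^n)}$.

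Dividing the equation for $w^\eps$ by $\eps$, applying the Fourier transform, and using variation of constants exactly as in Proposition 2.3, one obtains
\begin{equation*}
    \F w^\eps(t,k) = \int_0^t \e^{(-4\pi^2|k|^2 - \mu + \epsi\alpha)(t-s)}\, \F F(s,k)\,\textnormal{d} s.
\end{equation*}
Multiplying by $(1+|k|^2)$, bounding the Gaussian factor $\e^{-4\pi^2|k|^2(t-s)} \leq 1$, and taking the $L^2(\R^n)$ norm via Minkowski's inequality together with Plancherel yields
\begin{equation*}
    \|w^\eps(t)\|_{H^2(\R^n)} \leq C\|v_0\|_{H^2(\R^n)} \int_0^t \e^{(-\mu+\epsi\alpha)(t-s)}\, \e^{\kappa s}\,\textnormal{d} s = C\|v_0\|_{H^2(\R^n)}\,\frac{\e^{\kappa t} - \e^{(-\mu+\epsi\alpha)t}}{\kappa + \mu - \epsi\alpha}.
\end{equation*}

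The final and most delicate step is to extract the $\eps$ prefactor. Under the standing hypothesis $-\mu + \epsi\alpha < \kappa$ and since $\alpha < 0$, the denominator equals $\kappa + \mu + \epsi|\alpha|$, which for $\eps$ small enough dominates $|\kappa| + |\mu|$ and satisfies $\kappa + \mu + \epsi|\alpha| \geq \tfrac{1}{2}\epsi|\alpha|$; hence $1/(\kappa + \mu - \epsi\alpha) \leq 2\eps/|\alpha|$. Since the numerator is at most $\e^{\kappa t}$, this produces the claimed bound $\|w^\eps(t)\|_{H^2(\R^n)} \leq \eps C\e^{\kappa t}\|v_0\|_{H^2(\R^n)}$. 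The conceptual point — and the only real obstacle — is that although the forcing $F$ is $O(1)$ in $\eps$, the very large damping coefficient $\epsi\alpha$ in the Duhamel kernel suppresses the response by exactly one power of $\eps$, and one must carry this observation carefully through the integral rather than discard it by a naive supremum bound.
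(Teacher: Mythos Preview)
Your argument is correct and follows essentially the same route as the paper: both proofs identify the extra forcing term as $-\alpha^{-1}\beta(\kappa+\mu)v^0(t)$ via \eqref{v0}, apply the Fourier transform with variation of constants, evaluate the same exponential integral $\int_0^t \e^{(-\mu+\epsi\alpha)(t-s)}\e^{\kappa s}\,\textnormal{d}s$, and extract the factor of $\eps$ from the denominator $\kappa+\mu-\epsi\alpha = (-\alpha+\eps\mu+\eps\kappa)/\eps$. The only cosmetic difference is that the paper computes closed-form expressions for $u^{\eps,0}$ and $\tilde u^\eps$ separately and then subtracts, whereas you subtract the equations first and work directly with $w^\eps$; your ordering is slightly more economical but the content is identical.
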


\begin{proof}
Since by equation \eqref{v0} $(\partial_t - \Delta) v^0(t) =(- \nu -\beta\alpha^{-1} \gamma + \delta )v^0(t)= \kappa v^0(t) $, equation \eqref{linear extended system} can be written as 
\begin{align*} 
    \eps \partial_t u^{\eps,0}(t) = \eps (\Delta - \mu I) u^{\eps,0}(t) + \alpha u^{\eps,0}(t) +  \left( \beta  -\eps \beta\alpha^{-1} (  \mu +\kappa ) \right)  v^0(t).
\end{align*}
Applying the Fourier transform and multiplying with $\epsi$ gives
\begin{align*}
    \partial_t (\F u^{\eps,0})(t) &= (- 4\pi^2 |k|^2 - \mu + \epsi \alpha) (\F u^{\eps,0})(t) \\
    &\quad +   \left( \epsi \beta   -\beta\alpha^{-1} (  \mu +\kappa ) \right) \e^{(-4 \pi^2 |k|^2  +\kappa) t} (\F v_0) 
\end{align*}
for all $k \in \R^n$. By variation of constants we get 
\begin{align*}
    (\F u^{\eps,0})(t) &= \e^{ (-4 \pi^2 |k|^2 - \mu + \epsi \alpha) t} (\F u_0) \\ &+ 
    \left( \beta \epsi -\alpha^{-1}\beta ( \mu +\kappa ) \right) \int_0^t  \e^{(-4 \pi^2 |k|^2 - \mu + \epsi \alpha)(t-s)} \e^{ (- 4\pi^2 |k|^2 +\kappa) s} \textnormal{d} s (\F v_0).
\end{align*}
By performing the integration in the latter expression we obtain
\begin{align*}
    \int_0^t  \e^{(-4 \pi^2 |k|^2 - \mu + \epsi \alpha)(t-s)} \e^{ (- 4\pi^2 |k|^2 + \kappa) s} \textnormal{d} s 
    &= \e^{ (-4 \pi^2 |k|^2 - \mu + \epsi \alpha) t} \int_0^t \e^{ (  - \epsi \alpha + \mu+\kappa ) s} \textnormal{d} s  \\ 
    &= \frac{\e^{ (-4 \pi^2 |k|^2 - \mu + \epsi \alpha) t}}{ - \epsi \alpha + \mu +\kappa} \left(\e^{ (- \epsi \alpha + \mu+\kappa ) t} -1 \right)  \\
    &= \frac{\e^{ -4 \pi^2 |k|^2 t}}{ - \epsi \alpha + \mu+\kappa }  \left(\e^{ \kappa t } -\e^{ ( - \mu + \epsi \alpha) t}\right). 
\end{align*}
Therefore, after an inverse Fourier transformation we obtain
\begin{align*}
    u^{\eps,0}(t)\! &= \e^{( -\mu + \epsi \alpha) t} K_n(t)\! *\! u_0 + \frac{ \beta - \eps  \alpha^{-1}\beta ( \mu +\kappa )}{ - \alpha + \eps \mu+ \eps\kappa } \left(\e^{ \kappa t} -\e^{ (- \mu + \epsi \alpha) t} \right) K_n(t)\! *\! v_0.
\end{align*} 
By similar calculations we can derive from equation \eqref{linear fast tilde equation} that
\begin{equation*}
    \tilde {u}^{\eps}(t)= \e^{( -\mu + \epsi \alpha) t} K_n(t) * u_0 + \frac{ \beta}{ - \alpha + \eps \mu+ \eps\kappa } \left(\e^{\kappa t} -\e^{ (- \mu + \epsi \alpha) t} \right) K_n(t) * v_0 .
\end{equation*}
Hence, using that we have $-\mu + \epsi \alpha < \kappa $ yields 
\begin{align*}
    \| u^{\eps,0}(t) - \tilde {u}^\eps(t) \|_{H^2(\R^n)} 
    &= \frac{\eps |-\alpha^{-1}\beta ( \mu +\kappa )|}{|- \alpha + \eps \mu+ \eps\kappa|} \left| \e^{\kappa t}-  \e^{ (- \mu + \epsi \alpha) t}  \right| \|v_0\|_{H^2(\R^n)} \\
    & \leq C \frac{\eps |-\alpha^{-1}\beta ( \mu +\kappa )|}{|- \alpha + \eps \mu+ \eps\kappa|} \e^{\kappa t} \|v_0\|_{H^2(\R^n)} \\
    & \leq \eps C \e^{\kappa t}\|v_0\|_{H^2(\R^n)}.
\end{align*}
for some positive constant C.
\end{proof}

\begin{proposition}
 Let $\eps > 0$ be small enough such that $-\mu + \epsi \alpha<0$ . Then there exists a constant  $C>0$ such that
\begin{equation*}
  \|u^{\eps,0}(t)-h^0(v^0(t))  \|_{H^2(\R^n)} 
    \leq C \e^{(-\mu + \epsi \alpha)t} \| u_0 - h^0(v_0) \|_{H^2(\R^n)}.
\end{equation*}
\end{proposition}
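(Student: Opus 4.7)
The plan is to introduce the error variable $w(t) := u^{\eps,0}(t) - h^0(v^0(t))$ and show that the correction term $\eps(\partial_t - \Delta + \mu I)h^0(v^0)$ appearing in the definition of $u^{\eps,0}$ was engineered precisely so that $w$ satisfies a clean homogeneous linear equation. Once this is done, the estimate becomes a direct consequence of the Fourier representation of a linear heat-type semigroup, completely analogous to what was already done in the two previous propositions.

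First I would differentiate $w$ in time and substitute \eqref{linear extended system}:
\begin{align*}
\eps \partial_t w &= \eps\partial_t u^{\eps,0} - \eps\partial_t h^0(v^0) \\
&= \eps(\Delta-\mu I)u^{\eps,0} + \alpha u^{\eps,0} + \beta v^0 + \eps(\partial_t - \Delta + \mu I)h^0(v^0) - \eps\partial_t h^0(v^0).
\end{align*}
The two $\eps\partial_t h^0(v^0)$ contributions cancel, and collecting the operator $\eps(\Delta-\mu I)$ acting on $u^{\eps,0} - h^0(v^0) = w$ yields
\[
\eps \partial_t w = \eps(\Delta-\mu I)w + \alpha u^{\eps,0} + \beta v^0.
\]
Now I would use the defining identity $\alpha h^0(v^0) + \beta v^0 = 0$ of the critical manifold (valid since $h^0(y) = -\alpha^{-1}\beta y$) to rewrite $\alpha u^{\eps,0}+\beta v^0 = \alpha w$. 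Dividing by $\eps$ gives the homogeneous linear PDE
\[
\partial_t w = (\Delta - \mu I)w + \epsi \alpha\, w, \qquad w(0) = u_0 - h^0(v_0).
\]

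Applying the Fourier transform reduces this to the scalar ODE $\partial_t(\F w) = (-4\pi^2|k|^2 - \mu + \epsi\alpha)(\F w)$, whose explicit solution is $\F w(t) = \e^{(-4\pi^2|k|^2 -\mu+\epsi\alpha)t}\F w(0)$. Since $|\e^{-4\pi^2|k|^2 t}|\le 1$ uniformly in $k$, multiplying by $(1+|k|^2)$ and invoking Plancherel yields
\[
\|w(t)\|_{H^2(\R^n)} = \|(1+|k|^2)\F w(t)\|_{L^2(\R^n)} \le \e^{(-\mu+\epsi\alpha)t}\|u_0-h^0(v_0)\|_{H^2(\R^n)},
\]
which is the desired bound (with $C=1$, though absorbing constants from the $H^2$-norm equivalence one may take any $C\ge 1$). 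There is essentially no obstacle here: the entire content of the proposition is that the auxiliary equation \eqref{linear extended system} was designed so that the inhomogeneous forcing cancels the linearized drift on the critical manifold, leaving a purely exponentially contracting fast dynamics whenever $-\mu+\epsi\alpha<0$. The only minor bookkeeping is to verify that $h^0(v^0(t))\in H^2(\R^n)$ so the subtraction is well-defined, which follows immediately from the regularity of $v^0$ established earlier via the heat-kernel representation.
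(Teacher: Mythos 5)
Your proof is correct, and it reorganizes the argument in a way that differs from the paper's. The paper recycles the explicit variation-of-constants formula
\[
u^{\eps,0}(t) = \e^{(-\mu + \epsi \alpha) t} K_n(t) * u_0 \;-\;\alpha^{-1}\beta \left(\e^{\kappa t} -\e^{ (- \mu + \epsi \alpha) t} \right) K_n(t) * v_0
\]
already derived in the preceding proposition, subtracts $h^0(v^0(t)) = -\alpha^{-1}\beta\,\e^{\kappa t}K_n(t)*v_0$, and observes that the $v_0$-terms cancel exactly, leaving $\e^{(-\mu+\epsi\alpha)t}K_n(t)*\left(u_0-h^0(v_0)\right)$; the estimate then follows as in your last display. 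You instead perform the cancellation \emph{a priori}: you show that the error $w = u^{\eps,0}-h^0(v^0)$ solves the homogeneous contracting equation $\partial_t w = (\Delta-\mu I)w + \epsi\alpha\,w$, using exactly the two facts the correction term $\eps(\partial_t-\Delta+\mu I)h^0(v^0)$ was designed to deliver, together with the critical-manifold identity $\alpha h^0(y)+\beta y=0$. (Note you are implicitly using the correct sign $h^0(y)=-\alpha^{-1}\beta y$, consistent with the limit system's initial condition, whereas the paper's displayed definition of $h^0$ carries a sign typo.) Your route is more conceptual — it explains \emph{why} the auxiliary system \eqref{linear extended system} is built the way it is, and it never needs the explicit solution of $u^{\eps,0}$ — while the paper's route is shorter given that the explicit formula is already on the table from the previous step. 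Both hinge on the same algebra and the same Fourier/Plancherel bound $|\e^{-4\pi^2|k|^2 t}|\le 1$, and both yield the claim with $C=1$.
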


\begin{proof}
As in the previous proof we apply the variation of constant method along with the Fourier transform. 
Hence, we rewrite the solution $u^{\eps,0}$ as
$$
u^{\eps,0}(t) = \e^{( -\mu + \epsi \alpha) t} K_n(t) * u_0 -\alpha^{-1}\beta \left(\e^{\kappa t} -\e^{ (- \mu + \epsi \alpha) t} \right) K_n(t) * v_0
$$
and obtain
\begin{align*}
    &\|u^{\eps,0}(t)-h^0(v^0(t))  \|_{H^2(\R^n)}=\\
    &= \| \e^{( -\mu + \epsi \alpha) t} (1+|k|^2) K_n(t) \!*\! u_0 -\alpha^{-1}\beta  \left(\e^{\kappa t} -  \e^{ (- \mu + \epsi \alpha) t} \right)  (1+|k|^2)K_n(t)\! *\! v_0\\
    &\quad \quad +\alpha^{-1}\beta \e^{ \kappa t}  (1+|k|^2) K_n(t) * v_0 \|_{L^2(\R^n)} \\
    &= \| \e^{( -\mu + \epsi \alpha) t} (1+|k|^2) K_n(t) * \left[  u_0 +\alpha^{-1}\beta v_0  \right] \|_{L^2(\R^n)} \\
    &\leq C \e^{(-\mu + \epsi \alpha)t} \| u_0 - h^0(v_0) \|_{H^2(\R^n)}.
\end{align*}
\end{proof}    

Combining the three estimates from the previous propositions with $\eps$ small enough to fulfil the required conditions and $\alpha<0$ yields for a $C>0$
\begin{align}\label{u estimate}\begin{split}
   \| u^\eps(t) -h^0(v^0(t))  \|_{H^2(\R^n)}&\leq C \sup_{s \in [0,t]} \|v^\eps(s)- v^0(s) \|_{H^2(\R^n)} +  \eps C \e^{\kappa t}\|v_0\|_{H^2(\R^n)}\\
   &\quad +C \e^{(-\mu + \epsi \alpha)t} \| u_0 - h^0(v_0) \|_{H^2(\R^n)}.\end{split}
\end{align}

The next step in the proof is to estimate $\|v^\eps(t) - v^0(t) \|_{H^2(\R^n)}$. 
By variations of constants we obtain
$$
(\F v^\eps)(t) = \e^{(-4 \pi^2 |k|^2 - \nu) t} (\F v_0) + \int_0^t \e^{(-4 \pi^2 |k|^2 - \nu) (t-s)} (\gamma (\F u^\eps)(s) + \delta (\F v^\eps)(s) ) \textnormal{d} s
$$
and 
$$
(\F v^0)(t) = \e^{ (-4 \pi^2 |k|^2 - \nu ) t} (\F v_0) + \int_0^t \e^{(-4 \pi^2 |k|^2 - \nu ) (t-s)} (\delta-\gamma \alpha^{-1}\beta )(\F v^0)(s)  \textnormal{d} s.
$$
Hence, 
\begin{align*}
\| v^\eps(t) - v^0(t) \|_{H^2(\R^n)}&\leq \int_0^t \bigg(\|\e^{(-4 \pi^2 |k|^2- \nu)(t-s)} \|_{L^\infty(\R^n)}|\delta| \|v^\eps(s) -v^0(s)\|_{H^2(\R^n)})\\ 
&+ \|\e^{(-4 \pi^2 |k|^2- \nu)(t-s)} \|_{L^\infty(\R^n)} (|\gamma| \|u^\eps(s) - h^0(v^0(s))\|_{H^2(\R^n)}\bigg) \textnormal{d} s.
\end{align*}
Inserting the estimate for the fast variable \eqref{u estimate} yields
\begin{align*}
&\| v^\eps(t) - v^0(t) \|_{H^2(\R^n)} \leq \\
&\leq \int_0^t \e^{- \nu(t-s)} \bigg( C {  
\e^{(-\mu + \epsi \alpha)s} \|u_0 - h^0(v_0)\|_{H^2(\R^n)}
+ \eps C  \e^{\kappa s} \|v_0\|_{H^2(\R^n)}
 }  \\
& \qquad +  |\delta|\|v^\eps(s) -v^0(s)\|_{H^2(\R^n)} + C \sup_{r\in [0,s]}\|v^\eps(r) -v^0(r)\|_{H^2(\R^n)}  \bigg) \textnormal{d} s.
\end{align*}
Since the right-hand side of the inequality is increasing in time, we obtain
\begin{align*}
& \sup_{s\in [0,t]}\| v^\eps(s) - v^0(s) \|_{H^2(\R^n)} \\
&\leq C\int_0^t \e^{- \nu(t-s)} \big( {  
\e^{(-\mu +\epsi \alpha)s} \|u_0 - h^0(v_0)\|_{H^2(\R^n)}
+ \eps   \e^{ \kappa s} \|v_0\|_{H^2(\R^n)}
 }\big)  \textnormal{d} s\\
& \quad + C\int_0^t  \e^{- \nu(t-s)} \sup_{r\in [0,s]}\|v^\eps(r) -v^0(r)\|_{H^2(\R^n)}   \textnormal{d} s.
\end{align*}
This expression can be further simplified to 
\begin{align*}
    & \sup_{s\in [0,t]}\| v^\eps(s) - v^0(s) \|_{H^2(\R^n)} \leq\\
&\leq C \e^{-\nu t} \bigg(   \|u_0 - h^0(v_0)\|_{H^2(\R^n)}  \frac{\e^{(\epsi \alpha - \mu + \nu)t} -1}{ \epsi \alpha - \mu +  \nu}
+  \eps \|v_0\|_{H^2(\R^n)} \frac{\e^{( -\alpha^{-1}\beta \gamma + \delta)  t}-1} { -\alpha^{-1}\beta \gamma + \delta }\bigg)\\
& \quad +C \e^{- \nu t}\int_0^t  \e^{ \nu s}\sup_{r\in [0,s]}\|v^\eps(r) -v^0(r)\|_{H^2(\R^n)}   \textnormal{d} s.
\end{align*}
This estimate is of the form 
$$z(t)\leq A(t)+ \int_0^t B(s)z(s) \textnormal{d} s.$$
Hence, applying Gronwall's inequality yields 
$$z(t)\leq A(t) +\int_0^t A(s)B(s) \exp\bigg(\int_s^t B(r)dr\bigg) \textnormal{d} s.$$
We observe that for $0\leq s\leq t$
\begin{align*}
     \exp\bigg(C\int_s^t  \e^{-\nu(t-r)} dr\bigg)= \exp C\nu^{-1}  \big( 1 - \e^{-\nu(t-s) } \big)\leq \exp C\nu^{-1}  \big( 1 - \e^{-\nu t } \big) \leq  C_\nu(t),
\end{align*}
where $C_\nu(t)=C_\nu\geq 0$ is a constant if $\nu >0$ and otherwise $C_\nu(t)$ is a positive increasing function.  
Therefore, we can estimate
\begin{align*}
      &\int_0^t A(s)B(s) \exp\bigg(\int_s^t B(r)dr\bigg) \textnormal{d} s \leq\\
      & \leq C_\nu(t) \int_0^t   \|u_0 - h^0(v_0)\|_{H^2(\R^n)}  \frac{\e^{(\epsi \alpha - \mu )s} -\e^{-\nu s}}{ \epsi \alpha - \mu + \nu} \textnormal{d} s\\
&\quad +C_\nu(t)  \int_0^t \e^{-\nu s} \bigg(\eps \|v_0\|_{H^2(\R^n)} \frac{\e^{ (-\alpha^{-1}\beta \gamma + \delta )  s}-1} {-\alpha^{-1}\beta \gamma + \delta }\bigg) \textnormal{d} s\\
&\leq I_1+I_2.
\end{align*}
Evaluating each integral yields
\begin{align*}
    I_1 & \leq \frac{\eps C_\nu(t)}{ | \alpha - \eps\mu + \eps\nu|} \|u_0 - h^0(v_0)\|_{H^2(\R^n)} \bigg( \frac{1-\e^{(\epsi \alpha - \mu )t}}{\mu -\epsi \alpha } + \frac{\e^{-\nu t}-1}{\nu}\bigg)
    \intertext{and}
    I_2&\leq \frac{\eps C_\nu(t)}{|-\alpha^{-1}\beta \gamma + \delta|} \|v_0\|_{H^2(\R^n)} \bigg( \frac{\e^{ \kappa  t}-1}{\kappa}+\frac{\e^{-\nu t}-1}{\nu}\bigg).
\end{align*}
Hence we obtain
\begin{align*}
      & \sup_{s\in [0,t]}\| v^\eps(s) - v^0(s) \|_{H^2(\R^n)} \leq\\
&\leq \eps C_\nu(t)  \|u_0 - h^0(v_0)\|_{H^2(\R^n)}\bigg( \frac{\e^{(\epsi \alpha - \mu + \nu)t} -\e^{-\nu t}}{\alpha - \eps\mu + \eps\nu}+\frac{1-\e^{(\epsi \alpha - \mu )t}}{\mu -\epsi \alpha } + \frac{\e^{-\nu t}-1}{\nu}\bigg)\\
&\quad +\eps C_\nu(t) \|v_0\|_{H^2(\R^n)} \bigg( \frac{\e^{\kappa  t}-\e^{-\nu t}} { -\alpha^{-1}\beta \gamma + \delta }+\frac{\e^{ \kappa  t}-1}{\kappa }+\frac{\e^{-\nu t}-1}{\nu}\bigg).
\end{align*}
Again depending on the sign of $\nu$ and $\kappa$ we can write this estimate as
\begin{align*}
    \sup_{s\in [0,t]}\| v^\eps(s) - v^0(s) \|_{H^2(\R^n)} \leq \eps C_{\nu,\kappa}(t) \big(  \|u_0 - h^0(v_0)\|_{H^2(\R^n)} +\|v_0\|_{H^2(\R^n)}\big),
\end{align*}
where $ C_{\nu,\kappa}(t)$ is a positive increasing function.
In the case that $nu>0$ and $\kappa<0$ we have $ C_{\nu,\kappa}(t)=C$.
Moreover, we observe that the right hand side is increasing in time and thus 
\begin{align*}
    \| v^\eps(t) - v^0(t) \|_{H^2(\R^n)} \leq \eps C_{\nu,\kappa}(t) \big(  \|u_0 - h^0(v_0)\|_{H^2(\R^n)} +\|v_0\|_{H^2(\R^n)}\big).
\end{align*}
Plugging this into the estimate for the fast variable \eqref{u estimate} yields
\begin{align*}
   & \| u^\eps(t) -h^0(v^0(t))  \|_{H^2(\R^n)}\leq \\
   &\leq  \eps C_{\nu,\kappa} \bigg(  \|u_0 - h^0(v_0)\|_{H^2(\R^n)} + \|v_0\|_{H^2(\R^n)}\bigg) + \e^{(\epsi \alpha-\mu)t}\|u_0 - h^0(v_0)\|_{H^2(\R^n)}.
\end{align*}
The final step in the proof is to combine the two estimates, resulting in 
 \begin{align*}
   & \left\| 
    T_{\eps}(t) \begin{pmatrix}
    u_0  \\ v_0  \end{pmatrix} 
    - T_{0}(t) \begin{pmatrix}
    h^0(v_0)  \\ v_0 \end{pmatrix} 
    \right\|_{ H^2(\R^n) \times H^2(\R^n) } \leq \\
   & \leq \eps C_{\nu,\kappa} \bigg( \big(1+\epsi\e^{(\epsi \alpha-\mu)t}\big ) \|u_0 - h^0(v_0)\|_{H^2(\R^n)} + \|v_0\|_{H^2(\R^n)}\bigg),
    \end{align*}
    for $t> 0$.
This completes the proof of Theorem \ref{main main theorem}.

\section{Dynamics of a linear fast-reaction system}

As mentioned in the introduction finite dimensional fast-slow systems tend to have, under certain hyperbolicity assumptions, so called slow manifolds.
These slow manifolds are a family $S_\eps$ of locally invariant manifolds that converge to the critical manifold as $\eps \to 0$ with respect to the Hausdorff-distance, while also preserving the regularity. Moreover, the flow on those manifolds converges to the flow of the limit system as  $\eps \to 0$. This means  that the existence of slow manifolds imply that the limit system is a reasonable good approximation of the whole system near the critical manifold. \\
It is therefore desirable to also have slow manifolds in the case of infinite dimensional systems. However, one should be reminded that infinite dimensional manifolds are much more difficult to handle. \\

In this section we show that slow manifolds can be obtained for the linear fast-reaction system \eqref{linear system} and provide an explicit formula. This means that the results from finite dimensional fast-slow systems generalize to the infinite dimensional linear fast-reaction systems. 

The first step of adapting the Fenichel-Tikhonov theory to this linear fast-reaction system is to construct the invariant slow manifold of the system. 
To bridge the gap between the finite-dimensional theory and its infinite-dimensional extension, we apply the Fourier transform to the linear system  \eqref{linear system} and construct the slow manifolds for each Fourier coefficient $k\in \R$.
Then, the following lemma holds.
\begin{lemma}
Let $\alpha<0$. Then, for all $\eps>0$ small enough satisfying $\epsi \alpha- \mu<0$ the slow manifolds of system \eqref{linear complete system fourier} are given by 
\begin{equation}
C^{\F}_\eps:=\{ (x,y) \in \R^2 : (\eps (\delta-\nu +\mu) - \alpha - \eps \Omega^\eps)x - 2 \beta y = 0 \},
\end{equation}
where $\Omega^\eps := \sqrt{(\delta-\nu - \epsi \alpha+\mu)^2 + 4 \epsi \beta \gamma} \in \R$.
\end{lemma}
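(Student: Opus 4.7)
The plan is to use the fact that, for every fixed Fourier wave number $k\in\R^n$, the transformed system $\partial_t w_k^\eps=M_k^\eps w_k^\eps$ of Lemma~\ref{solution lemma of linear system} is a planar linear ODE with two real eigenvalues $\lambda^\eps_\pm(k)$. Its invariant manifolds through the origin are therefore exactly the one-dimensional eigenspaces of $M_k^\eps$, and the slow manifold $C^{\F}_\eps$ is by definition the eigenspace whose associated eigenvalue stays bounded as $\eps\to 0$, while the other eigenvalue diverges at rate $\epsi\alpha$ and generates the exponentially decaying fast motion.

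The first step is to single out the slow branch. A short rewriting of the discriminant gives
$$
\eps\Omega^\eps=\sqrt{\bigl(\eps(\delta-\nu+\mu)-\alpha\bigr)^2+4\eps\beta\gamma}\xrightarrow[\eps\to 0]{}|\alpha|=-\alpha,
$$
so, combined with $\alpha<0$ and $\epsi\alpha-\mu<0$, one of $\lambda^\eps_\pm$ scales like $\epsi\alpha\to-\infty$ while the other stays bounded for fixed $k$; the bounded branch is the slow eigenvalue and its eigenspace is our candidate for $C^{\F}_\eps$. Next I would read the eigenspace off the first row of $(M_k^\eps-\lambda^\eps_{\mathrm{slow}}I)v=0$: writing $a:=\epsi\alpha-\mu-4\pi^2|k|^2$ and $d:=\delta-\nu-4\pi^2|k|^2$, the eigenvalue formula $\lambda^\eps_{\mathrm{slow}}=\tfrac12(a+d\mp\Omega^\eps)$ collapses that row to $\tfrac12(a-d\pm\Omega^\eps)x+\epsi\beta y=0$. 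Since $a-d=\epsi\alpha-\mu-\delta+\nu$ is independent of $k$, multiplying through by $2\eps$ yields a homogeneous linear relation in $(x,y)$ of exactly the form defining $C^{\F}_\eps$, with a slope that is $k$-independent as claimed. Invariance of this line under the flow generated by $M_k^\eps$ is automatic, since any eigenspace is invariant under the matrix exponential.

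Two short checks close the argument. The hypothesis $\epsi\alpha-\mu<0$ ensures that the coefficient of $x$ in the defining equation does not vanish for small $\eps$, so $C^{\F}_\eps$ is a genuine one-dimensional graph in $\R^2$; and passing to the limit with $\eps\Omega^\eps\to -\alpha$ recovers $\alpha x+\beta y=0$, i.e.\ the Fourier image of the critical set $S_0$, consistent with the Fenichel--Tikhonov picture described in the introduction. The only point requiring care is the $\mp\Omega^\eps$ sign bookkeeping when passing from the eigenvalue formula to the implicit equation for $C^{\F}_\eps$; this is the sole potential pitfall, and it is purely algebraic rather than analytic.
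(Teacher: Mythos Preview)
Your approach matches the paper's: both identify the invariant manifolds of the planar system $\partial_t w_k^\eps = M_k^\eps w_k^\eps$ as the eigenspaces of $M_k^\eps$ and then select the slow branch, with the paper writing out explicit eigenvectors $w^\eps_\pm$ while you read the implicit equation off the first row of $(M_k^\eps-\lambda I)v=0$. Your flagged caution about the $\mp\Omega^\eps$ sign bookkeeping is well placed, and the consistency check against the critical-manifold limit $\alpha x+\beta y=0$ that you include is a useful safeguard that the paper's proof does not make explicit.
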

\begin{proof}
Recall that after applying a Fourier transform to system \eqref{linear system} we can write the system as
\begin{align}\label{linear system fourier}
    \partial_t w_k^\eps(t)= M_k^\eps w_k^\eps(t),
\end{align}
where
\begin{align*}
    M_k^\eps := 
\begin{pmatrix}
\epsi \alpha - \mu - 4 \pi^2 |k|^2 & \epsi \beta  \\
\gamma & \delta -\nu - 4 \pi^2 |k|^2  \\
\end{pmatrix} \in \R^{2 \times 2}.
\end{align*}
The existence of slow manifolds follows directly from the theory for fast-slow ODEs, since in this case we have that the normal hyperbolicity condition is fulfilled since $\epsi \alpha- \mu<0$.
Moreover, we can explicitly construct the slow manifolds by computing the eigenspaces corresponding to the linear ODE system. 
Since system (\ref{linear system fourier}) is linear and finite-dimensional, the only locally invariant manifolds are submanifolds of the two eigenspaces $E^\eps_+$ and $E^\eps_-$ of $M^\eps_k$.
As we have seen in the proof of Lemma \ref{solution lemma of linear system}, for $\eps$ small enough there exist two non identical eigenspaces of $M_k^\eps$ for almost every $k$.
One possible representation of basis vectors $w^\eps_+$ and $w^\eps_-$ such that $E^\eps_+= \text{span}\{ w^\eps_+\}$ and $E^\eps_-= \text{span} \{ w^\eps_-\}$ is given by 
\begin{equation*}
w^\eps_\pm = \begin{pmatrix} 2 \epsi \beta \\ \delta-\nu - \epsi \alpha+ \mu \pm \Omega^\eps \end{pmatrix} ,
\end{equation*} 
where  $\Omega^\eps := \sqrt{(\delta-\nu - \epsi \alpha+\mu)^2 + 4 \epsi \beta \gamma}$.
We recall that the corresponding eigenvalues are given by
$$ 
\lambda^\eps_{\pm}(k) := \frac{1}{2}\left(\epsi \alpha - \mu + \delta -\nu - 8\pi^2 |k|^2  \mp \Omega^\eps \right) \in \R.
$$
From this we deduce that the attracting invariant slow manifold is given by $w^\eps_-$.
\end{proof} 

Since the eigenvectors completely determine the behaviour of system (\ref{linear system fourier}) we desire to compute $\lim_{\eps\to 0} w^\eps_\pm$.
In order for the eigenspaces to maintain a finite basis, even in the limit as $\eps \to 0$ we scale $w^\eps_\pm$ with $\eps$.
Now, we can take the limit and obtain
\begin{align*}
\eps w^\eps_\pm &= \begin{pmatrix} 2 \beta \\ \eps \delta - \alpha \pm \eps \Omega^\eps
\end{pmatrix} = \begin{pmatrix} 2 \beta \\ \eps (\delta-\nu +\mu) - \alpha \pm \sqrt{(\eps \delta -\eps \nu +\eps \mu - \alpha)^2 + 4 \eps \beta \gamma} \end{pmatrix}\\
&\to \begin{pmatrix} 2 \beta \\ - \alpha \pm |\alpha|
\end{pmatrix} := w_\pm^0
\end{align*}
as $\eps \to  0$. 
That is we obtain
$$  
w_-^0 = \begin{pmatrix} 2 \beta \\ - 2 \alpha \end{pmatrix}, \quad  w_+^0 = \begin{pmatrix} 2 \beta \\ 0 \end{pmatrix}.
$$
Thus, $w^0_-$ spans a linear subspace corresponding to 
\begin{equation*}
C_0^{\F}:= \{ (x,y) \in \R^2 : \alpha x + \beta y =0\} \subset \R^2,
\end{equation*}
which is the critical manifold of system (\ref{linear system fourier}) in the limit as $\eps \to 0$.
We have therefore found a family of invariant manifolds 
$C^{\F}_\eps:= E_+^\eps$, which is one eigenspace of $M_k^\eps$, such that $C^{\F}_\eps \to C^{\F}_0$ for $\eps \to 0$ for almost every $k$. 

%
%
This exact finite dimensional relation can be shown to also hold infinite dimensions.
Due to the identical structure of the solutions of (\ref{linear system}) and (\ref{linear complete system fourier}) these systems have the same linear invariant manifolds.
We summarize this fact in the following lemma.
\begin{lemma} \label{lemma finite to infinite}
Let $a,b \in \R \backslash \{0\}$, $H:=H^2(\R^n)$ and $\eps$ small enough such that $\Omega^\eps \in \R \backslash \{ 0\} $. Then, the following two statements are equivalent.
\begin{itemize}
    \item[(1)] There exists an invariant manifold, called the slow manifold, $S_\eps\subset H\times H$ to system (\ref{linear system});
    \item[(2)] $C^{\F}_\eps:=\{ (x,y) \in \R^2 : (\eps (\delta-\nu +\mu) - \alpha - \eps \Omega^\eps)x - 2 \beta y = 0 \}$ is an invariant manifold of system (\ref{linear complete system fourier}) for almost every $k \in \R^n$.
\end{itemize}
%
\end{lemma}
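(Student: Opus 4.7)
The strategy is to transport invariance through the Fourier transform. By Plancherel together with the Fourier characterization $H^2(\R^n) = \{f \in L^2 : (1+|k|^2)\F f \in L^2\}$, the map $\F$ is an isometric (up to weight) isomorphism between $H \times H$ and its image, and it intertwines the PDE semiflow associated with \eqref{linear system} with the pointwise-in-$k$ family of linear flows $t \mapsto \e^{M_k^\eps t}$ for the ODEs \eqref{linear complete system fourier}. Consequently, a subset $S \subset H \times H$ is invariant under the PDE semiflow if and only if its Fourier image is invariant under the fiberwise action of $\e^{M_k^\eps t}$ for almost every $k \in \R^n$. Moreover, since the $k$-dependence of $M_k^\eps$ enters only through the scalar $-4\pi^2|k|^2 I$, the flows at different $k$ share the same invariant subspaces of $\R^2$, namely the two eigenspaces, which do not depend on $k$.

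For the implication $(2) \Rightarrow (1)$, I would define
\begin{equation*}
    S_\eps := \bigl\{ (u,v) \in H \times H : (\F u(k), \F v(k)) \in C^{\F}_\eps \text{ for a.e. } k \in \R^n \bigr\}.
\end{equation*}
Because $C^{\F}_\eps$ is a one-dimensional linear subspace of $\R^2$ whose defining relation is $k$-independent (as $\Omega^\eps$ does not depend on $k$), $S_\eps$ is itself a closed linear subspace of $H \times H$, hence a linear submanifold. By Fourier inversion it can equivalently be written in physical space as the single linear relation $(\eps(\delta - \nu + \mu) - \alpha - \eps\Omega^\eps)u - 2\beta v = 0$. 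The fiberwise invariance of $C^{\F}_\eps$ under $\e^{M_k^\eps t}$ assumed in (2) then transfers directly to invariance of $S_\eps$ under the PDE semiflow by the intertwining above.

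For the reverse implication $(1) \Rightarrow (2)$, I would apply the Fourier transform to a given invariant $S_\eps$, obtaining a set $\F(S_\eps)$ invariant under the fiber flow $\e^{M_k^\eps t}$. Since this family of flows fully decouples across $k$, invariance propagates pointwise for almost every $k$, and the only nontrivial one-dimensional invariant subspaces of $M_k^\eps$ are its two eigenspaces. Identifying the slow (attracting) direction with $C^{\F}_\eps$, using the sign analysis from Lemma~\ref{solution lemma of linear system} available when $\epsi\alpha - \mu < 0$, concludes the equivalence. The main technical subtlety is the measure-zero set $Q_+ \cup Q_-$ of wavenumbers where the spectrum of $M_k^\eps$ degenerates or an eigenvalue crosses zero; since these are $(n-1)$-dimensional spheres in $\R^n$ they carry no Lebesgue mass and hence do not affect the $L^2$-level construction or the ``a.e. $k$'' clause in (2).
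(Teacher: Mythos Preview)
Your argument is correct and captures the essential mechanism, but it proceeds along a different route than the paper's proof. You exploit the abstract intertwining property: the Fourier transform conjugates the PDE semiflow with the fiberwise ODE flow $\e^{M_k^\eps t}$, and since $M_k^\eps$ differs from $M_0^\eps$ only by a scalar multiple of the identity, its eigenspaces are $k$-independent, so invariance of a fixed line in $\R^2$ transports directly to invariance of the corresponding linear subspace of $H\times H$. The paper instead works entirely with the \emph{explicit} solution formula from Lemma~\ref{solution lemma of linear system}: starting from $(u,v)\in C^{\F}_\eps$, it writes out the flow in terms of the coefficients $\rho_i^\eps$ and the exponentials $\e^{\lambda_\pm^\eps(k)t}$, and then equates the coefficients of $\e^{\lambda_+^\eps(k)t}$ and $\e^{\lambda_-^\eps(k)t}$ (using $\lambda_+^\eps\neq\lambda_-^\eps$) to extract purely scalar algebraic relations among the $\rho_i^\eps$. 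Those scalar relations are then reassembled against arbitrary $\tilde v\in H^2(\R^n)$ to conclude invariance of $S_\eps$. Your approach is cleaner and makes the $k$-independence of the eigenspaces the visible reason for the result; the paper's approach is more hands-on and ties directly into the explicit constants already computed. One small point: in your $(1)\Rightarrow(2)$ direction you implicitly assume that the given $S_\eps$, once Fourier-transformed, yields a \emph{one-dimensional} linear subspace of $\R^2$ in each fiber, which is only justified once one reads ``slow manifold'' as the specific codimension-one linear relation; the paper sidesteps this by proving only $(2)\Rightarrow(1)$ in detail and declaring the converse analogous.
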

\begin{proof}
Note that $(0,0)$ is a steady state in both systems. We can therefore exclude it in what follows, since we are only interested in nontrivial invariant manifolds. We only show that (2) implies (1), but the other direction works exactly the same. \\
Let $w:=(u,v) \in C^{\F}_\eps\backslash\{0\}$ and choose any $k \in \R^n$ such that we can construct the explicit solution via Fourier transforms. Denote $\phi^t_k(u,v)$ as the flow of this system (\ref{linear system fourier}). Due to invariance, we have that $\phi_k^t(u,v) \in C^{\F}_\eps$ for all $t>0$. Expressed in formulas this means   
\begin{align*}
&a [(\rho^\eps_1 \e^{\lambda^\eps_+(k) t} + \rho^\eps_2 \e^{\lambda^\eps_-(k) t}) u  + (\rho_3^\eps \e^{\lambda^\eps_+(k) t} + \rho^\eps_4 \e^{\lambda^\eps_- t(k)}) v ] \\
+ &b [ (\rho_5^\eps \e^{\lambda_+^\eps(k) t} + \rho_6^\eps \e^{\lambda_-^\eps(k)} t) u + (\rho_7^\eps \e^{\lambda^\eps_+(k) t} + \rho^\eps_8 \e^{\lambda^\eps_-(k) t}) v] = 0  
\end{align*}
for all $t>0$. Since $\lambda_+(k) \neq \lambda_-(k)$ it follows by equating coefficients that
\begin{align*}
(a \rho^\eps_1   + b \rho^\eps_5 ) u + (a \rho^\eps_3 + b \rho^\eps_7 ) v = 
(a \rho^\eps_2  + b \rho^\eps_6 ) u + (a \rho^\eps_4 + b \rho^\eps_8 ) v =0.
\end{align*}
Since we have $u = - \frac{b}{a} v$, so we can also write the above equation as
\begin{align*}
\left( - b \rho^\eps_1   - \frac{b^2}{a} \rho^\eps_5  + a \rho^\eps_3 + b \rho^\eps_7 \right) v =
\left(- b \rho^\eps_2  - \frac{b^2}{a} \rho^\eps_6  + a \rho^\eps_4 + b \rho^\eps_8 \right) v= 0.
\end{align*}
Since $w \neq (0,0)$ we can assume $v\neq0$. So we can deduce from the last equation that 
\begin{align*}
\left( - b \rho^\eps_1   - \frac{b^2}{a} \rho^\eps_5  + a \rho^\eps_2 + b \rho^\eps_7 \right) =
\left(- b \rho^\eps_2  - \frac{b^2}{a} \rho^\eps_6  + a \rho^\eps_4 + b \rho^\eps_8 \right) = 0.
\end{align*}
Therefore, for any $\tilde {v} \in H^2(\R^n)$ we also have
\begin{align*}
\left( - b \rho^\eps_1   - \frac{b^2}{a} \rho^\eps_5  + a \rho^\eps_3 + b \rho^\eps_7 \right) *\tilde {v} = 
\left(- b \rho^\eps_2  - \frac{b^2}{a} \rho^\eps_6  + a \rho^\eps_4 + b \rho^\eps_8 \right) *\tilde {v}= 0
\end{align*}
and by defining $\tilde {u}:=-\frac{b}{a} \tilde{v} \in H^2(\R^n)$ we can follow
\begin{align*}
(a \rho^\eps_1   + b \rho^\eps_5 ) *  \tilde {u} + (a \rho^\eps_3 + b \rho^\eps_7 ) *  \tilde {v} =
(a \rho^\eps_2  + b \rho^\eps_6 )* \tilde {u} + (a \rho^\eps_4 + b \rho^\eps_8 ) *  \tilde {v} = 0
\end{align*}
for all $(\tilde {u},\tilde {v}) \in S_0$. From this it follows that
\begin{align*}
&a [(\rho^\eps_1 L^\eps_+(x,t) + \rho^\eps_2  L^\eps_-(x,t)) *\tilde {u}  + (\rho_3^\eps  L^\eps_+(x,t) + \rho^\eps_4  L^\eps_-(x,t)) *\tilde {v} ] \\
+ &b [ (\rho_5^\eps  L^\eps_+(x,t) + \rho_6^\eps  L^\eps_-(x,t) *\tilde {u} + (\rho_7^\eps  L^\eps_+(x,t) + \rho^\eps_8  L^\eps_-(x,t)) *\tilde {v}] = 0  
\end{align*}
for all $t>0$. So we have shown that $S_\eps$ is invariant.
\end{proof}
Similarly, we obtain that $C_0:=\{ (u,v) \in \R^2: a u + b v = 0\} \subset \R^2$ and $S_0:=\{ (u,v) \in H^2(\R^n) \times H^2(\R^n): a u + b v = 0\} \subset H \times H$ are equivalent in the above sense.
In addition, from the above lemma it can be concluded that two maximal invariant manifolds of system (\ref{linear system}) are given by
$$ 
C^\pm_\eps:= \{ (u,v) \in H \times H : (\eps \delta - \alpha \pm \eps \Omega^\eps) u - 2 \beta v = 0 \}. 
$$
As in the finite dimensional case $C_\eps^-$ converges to the critical attracting manifold of the linear fast-reaction system (\ref{linear system}) as $\eps \to 0$ if $\alpha<0$. 
This along with the  other properties of slow manifolds can be summarized in the following result which is an extension of the classical Fenichel-Tikhonov theory.

\begin{theorem} \label{slow manifold proposition}
Let $\alpha<0$ and $\eps$ small enough such that $\epsi \alpha-\mu<0$ and $\Omega^\eps \in \R \backslash \{0\}$. Then the following is true
%
%
\begin{itemize}
    \item[i)] $C^-_\eps $ is the attracting slow manifold of the linear fast-reaction system \eqref{linear system} which is invariant under the semi flow $T_\eps$. Moreover, the slow manifold has the same regularity as the critical manifold;
    \item[ii)] The slow manifold $C^-_\eps$ converges to the critical manifold  $C_0$ as $\eps \to 0$. In addition, on bounded subsets the distance between these two objects is given by $d_H(C^-_\eps,C_0) = \mathcal{O}(\eps)$ with $d_H$ being the Hausdorff-distance;
       \item[iii)] The semi flow $T_\eps$ restricted to $C^-_\eps$ converges to the semi flow $T_0$ on $C_0$ as $\eps \to 0$.
\end{itemize}

\end{theorem}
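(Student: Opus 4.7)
The plan is to deduce all three parts from the Fourier--physical equivalence in Lemma \ref{lemma finite to infinite} together with the convergence rate of Theorem \ref{main main theorem}, reducing most of the work to explicit two-by-two calculations on the matrix $M^\eps_k$.

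For (i), invariance of $C^-_\eps$ under $T_\eps$ is immediate from Lemma \ref{lemma finite to infinite}, since the Fourier-side set $C^{\F}_\eps$ is by construction an eigenspace of $M^\eps_k$ and hence invariant under $\exp(tM^\eps_k)$ for almost every $k$, and the equivalence lemma transports this to the infinite-dimensional semiflow. Attractiveness is the spectral gap: $\lambda^\eps_+(k) - \lambda^\eps_-(k) = -\Omega^\eps \sim \epsi\alpha$ for small $\eps$ since $\alpha<0$, so any component of an initial datum transverse to the slow direction decays exponentially at a rate that diverges as $\eps\to 0$. Regularity is automatic because $C^-_\eps$ is defined by a single scalar-coefficient linear relation in $H^2(\R^n)\times H^2(\R^n)$, hence a closed linear subspace, smooth of every order, matching the (linear) regularity of $C_0$.

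For (ii), I would expand $\eps\Omega^\eps = \sqrt{A_\eps^2 + 4\eps\beta\gamma}$ with $A_\eps := \eps(\delta - \nu + \mu) - \alpha$ in powers of $\eps$ (noting that $A_\eps \to -\alpha\neq 0$), and rewrite the defining equation of $C^-_\eps$ as the graph $u = h^\eps(v)$ of a bounded linear operator; the same expansion yields $\|h^\eps - h^0\|_{\mathrm{op}} = \mathcal{O}(\eps)$. Restricting to a bounded set $B_R$ and computing the nearest-point projection between the two graphs in $H^2(\R^n)\times H^2(\R^n)$ then gives $d_H(C^-_\eps\cap B_R,\, C_0\cap B_R)\leq C_R\eps$. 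For (iii), by (i) the semiflow $T_\eps$ restricts to $C^-_\eps$, and for any $(u_0, v_0) \in C^-_\eps$ lying in a bounded set, part (ii) supplies $\|u_0 - h^0(v_0)\|_{H^2(\R^n)} = \mathcal{O}(\eps)\|v_0\|_{H^2(\R^n)}$; feeding this into Theorem \ref{main main theorem} produces an $\mathcal{O}(\eps)$ bound on $\|T_\eps(t)(u_0,v_0) - T_0(t)(h^0(v_0),v_0)\|_{H^2(\R^n)\times H^2(\R^n)}$ locally uniformly in $t$, and continuity of $T_0$ on $C_0$ closes the argument.

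The main technical obstacle is (ii): both coefficients in the defining equation of $C^-_\eps$ depend on $\eps$ through $\Omega^\eps$, so one must expand carefully to extract the correct $\mathcal{O}(\eps)$ rate rather than a weaker estimate. Passing to the graph form $u = h^\eps(v)$ and computing $h^\eps$ via $A_\eps^{-1}$ is the cleanest way to isolate this rate, and it simultaneously makes the reduction from (ii) to (iii) essentially automatic.
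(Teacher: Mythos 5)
Your proposal is correct, and parts (i) and (ii) follow essentially the paper's own route: invariance and regularity come from Lemma \ref{lemma finite to infinite} and the linearity of the manifolds, and the Hausdorff estimate comes from tracking how the scalar coefficient $\eps\delta-\alpha\pm\eps\Omega^\eps$ in the defining equation converges to $-2\alpha$ at rate $\mathcal{O}(\eps)$; your explicit expansion of $\eps\Omega^\eps=\sqrt{A_\eps^2+4\eps\beta\gamma}$ around $A_\eps\to-\alpha\neq 0$ is in fact cleaner than the paper's argument, which extracts the rate somewhat indirectly, and your spectral-gap justification of attractiveness ($\lambda^\eps_+-\lambda^\eps_-=-\Omega^\eps\sim\epsi\alpha$) supplies a detail the paper only asserts. (Minor slip: only the coefficient of $u$ depends on $\eps$; the coefficient of $v$ is the constant $-2\beta$.) The genuine divergence is in (iii). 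The paper substitutes the slow-manifold relation into the $v$-equation of \eqref{linear system} to obtain a closed scalar PDE for $v^\eps$ whose zeroth-order coefficient $2\beta\gamma/(\eps\delta-\alpha-\eps\Omega^\eps)+\delta$ converges to $-\alpha^{-1}\beta\gamma+\delta$, and passes to the limit there; this is self-contained and identifies the reduced equation on $C^-_\eps$ explicitly. You instead feed the $\mathcal{O}(\eps)$ closeness of data on $C^-_\eps$ to $C_0$ from part (ii) into Theorem \ref{main main theorem}. This is valid and buys a quantitative rate: the boundary-layer contribution $\epsi\e^{(\epsi\alpha-\mu)t}\|u_0-h^0(v_0)\|_{H^2(\R^n)}$ appearing in the final estimate of the proof of Theorem \ref{main main theorem}, which is not uniformly $\mathcal{O}(\eps)$ near $t=0$ for general data, is exactly neutralized when $\|u_0-h^0(v_0)\|_{H^2(\R^n)}=\mathcal{O}(\eps)\|v_0\|_{H^2(\R^n)}$, so restricted to the slow manifold you obtain convergence at rate $\mathcal{O}(\eps)$ uniformly on $[0,T]$ rather than the paper's purely qualitative limit. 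Both arguments are sound; the paper's makes the limiting reduced flow visible, yours makes the convergence rate visible.
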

\begin{proof}
i) The invariance follows from Lemma \ref{lemma finite to infinite} and regularity properties holds due to the linearity of the manifolds. \\ \\
ii) Since $\eps \delta - \alpha - \eps \Omega^\eps \to - 2 \alpha$ as $\eps \to 0$, the equation describing the attracting slow manifold $C_\eps^-$ converges to the equation describing $C_0$.
Next, we define the Hausdorff distance as
$$ 
d_H(C_0,C_\eps^-) := \inf \{ {\lambda \geq 0} : C_0 \subset C_\eps^- + B_\lambda(0), C_\eps^- \subset C_0 + B_\lambda(0)  \}.
$$
Here $+$ denotes the Minkowski sum between sets. 
That is for $U,V \subset H^2(R^n)$ we define 
$$ 
U + V :=\{u + v : u \in U, v \in V\} \subset H^2(R^n).
$$ 
We first want to show $C_\eps^- \subset C_0 + B_\lambda(0)$.
To this end, let $w^\eps=(u,v^\eps)\in C_\eps^-$ such that $\|u\|_{H^2(\mathbb{R}^n)}\leq M$. Due to the convergence of $C^-_\eps \to C_0$, we can find $\tilde {\eps}>0$ such that $ |  \frac{\tilde {\eps} \delta - \alpha -\tilde\eps \Omega^{\tilde {\eps}}}{2 \beta} + \frac{\alpha}{\beta} | < \eps^2$. 
Let  $w^0=(u,- \frac{\beta}{\alpha} u)\in C_0$.
Since we consider an element on the slow manifold we have $v^\eps= \frac{\tilde {\eps} \delta - \alpha -\tilde\eps \Omega^{\tilde {\eps}}}{2 \beta} u$ and thus we can compute
\begin{align*}
    \|w^\eps - w^0\|^2_{H^2(R^n) \times H^2(R^n)} :&= \|u - u \|^2_{H^2(R^n)} + \| - \frac{\beta}{\alpha} u - v^\eps \|^2_{H^2(R^n)} \\& = \left|   \frac{\tilde {\eps} \delta - \alpha -\tilde\eps \Omega^{\tilde {\eps}}}{2 \beta} + \frac{\alpha}{\beta}\right| \| u \|^2_{H^2(R^n)}\\
    &< \eps^2  \| u \|^2_{H^2(R^n)} \leq \eps^2 M^2 .
\end{align*}
Hence we have shown that $C_0 \subset C_\eps^- + B_\lambda(0)$ for $\lambda=\eps M=\mathcal{O}(\eps)$.
The other inclusion can be shown in a similar way.
\\ \\ 
iii) If $(u_0,v_0) \in C_\eps^-$ we know that the semi flow $T_\eps(t)(u_0,v_0) \in C_\eps^-$ for all $t>0$ due to i). Therefore, we can eliminate $u^\eps$ in the second equation of system \eqref{linear system} and obtain
\begin{align*}
    \partial_t v^\eps(t) = (\Delta - \nu I) v^\eps(t) + \bigg(\gamma \frac{2 \beta}{\eps \delta - \alpha - \eps \Omega^\eps} + \delta\bigg) v^\eps(t).
\end{align*} 
Passing to the limit we obtain $\lim_{\eps \to 0} v^\eps = v^0$, where $v^0$ is the second component of the semi flow $T_0$. Then, 
$$u^\eps = \frac{2 \beta}{\eps \delta - \alpha - \eps \Omega^\eps} v^\eps \to - \frac{\beta}{\alpha}  v^0 = h^0 (v^0) \text{ as } \eps \to 0.$$ 
\end{proof}

\section{Conclusion}

The first observation we make is that the general results presented in \cite{kuehn2023fast} also apply in this linear fast-reaction setting. 
However, the advantage of this work is that the slow manifold is explicitly constructed and the convergence to the critical manifold only depends on the separation of time scales parameter $\varepsilon$ but not on any spectral gap type parameter.
Moreover, the results presented here hold for any dimension $N\in \mathbb{N}$ and can also be applied to bounded domains with either zero Dirichlet or periodic boundary conditions.

Secondly, we want to compare the normal hyperbolicity assumptions used in this work.
When working on the level of Fourier transforms the PDE system turns into a two dimensional ODE in each of the Fourier coefficients. 
Here, we can apply the normal hyperbolicity condition that the eigenvalues of the linearized matrix in the fast direction have a non-zero real part.
After taking the inverse Fourier transform, this translates to the normal hyperbolicity condition in the abstract Banach space setting, presented in \cite{kuehn2023fast}.

Thirdly, we comment on the influence of the parameter $\alpha$.
For $\alpha<0$ the critical manifold is attracting normally hyperbolic and so is the slow manifold. 
In addition, the convergence of solutions and the convergence of the semi flow on the manifolds holds for any $t\in (0,T)$ for any $T>0$.
Now, since this is a linear system, the results would also hold for $\alpha>0$ leading to a repelling slow manifold. 

Lastly, note carefully that we could only disregard the loss of normal hyperbolicity in the Fourier domain for some modes $k$ in our proofs as the measure of these modes is zero so it is not visible for the Fourier inversion. For nonlinear systems with a spectral gap, a similar mechanism is making the existence of invariant manifolds possible~ \cite{kuehn2023fast}. Yet, if the failure of formal hyperbolicity occurs on a set of positive measure in the Fourier domain, we conjecture that it is no longer possible by standard methods to derive invariant manifolds. In particular, this emphasizes the role of a nonlinearity in contrast to the expository linear case we have discussed here: the nonlinearity provides a possibility for mixing of the Fourier modes so that a positive measure set in the Fourier domain lacks normal hyperbolicity.


\printbibliography





\end{document}